\def\Z{{\mathbb Z}} \def\R{{\mathbb R}} \def\Q{{\mathbb Q}} 
\long\def\comment#1\endcomment{}
\def\inc{\mathop{\fam0 i}}
\def\pr{\mathop{\fam0 pr}}
\def\rk{\mathop{\fam0 rk}}
\def\forg{\mathop{\fam0 forg}}
\def\con{\mathop{\fam0 Con}}
\def\id{\mathop{\fam0 id}}
\def\Int{\mathop{\fam0 Int}}
\def\Cl{\mathop{\fam0 Cl}}
\def\im{\mathop{\fam0 im}}
\def\coker{\mathop{\fam0 coker}}
\def\cs{\phantom{}_{\inc}\#}
\theoremstyle{theorem}
\newtheorem{Theorem}{Theorem}[section]
\newtheorem{Lemma}[Theorem]{Lemma}
\newtheorem{Corollary}[Theorem]{Corollary}
\newtheorem{Conjecture}[Theorem]{Conjecture}
\theoremstyle{definition}
\newtheorem{Remark}[Theorem]{Remark}
\begin{document}

\comment

I am grateful for suggestions of the Referee and the Editor.
All of them are incorporated.
In particular,

* The phrase `In this paper the sign $\circ$ of the composition is often omitted'
is moved before Corollary 1.5, where it is first used.
I would agree to have it even earlier, but then there is a danger
of the reader forgetting that phrase when it is used.

* The phrase

`Methods of those papers work hardly for connected manifolds, for $2m<3n+3-d$, for high-dimensional manifolds and without the stronger dimension restriction, respectively.'

is changed to

`The methods of those papers essentially use the restrictions present there.'

I would agree to add
`i.e. that the manifolds are disjoint unions of spheres,  that $2m\ge3n+3-d$, that the manifolds are 3- or 4-dimensional, and the stronger dimension restriction, respectively.'

{\bf \uppercase{Classification of knotted tori.} A. Skopenkov}

\bigskip

Moscow Institute of Physics and Technology and Independent University of Moscow.

Homepage: \url{www.mccme.ru/~skopenko}.

\bigskip

Links and embeddings of highly-connected manifolds in codimension $>2$ were classified by Haefliger and Hirsch in 1960s.
A classification of {\it knotted tori}, i.e. embeddings $S^p\times S^q\to\R^m$ up to isotopy, is a natural next step towards classification of embeddings of {\it arbitrary} manifolds.
Many interesting examples of such embeddings and classification for particular cases were given by
Alexander (1924), Kosinski (1961), Hudson (1963), Wall (1965), Tindell (1969), Boechat and Haefliger 1970-71,  Milgram-Rees (1971), Lucas-Saeki 2002, Skopenkov (2002), see [M].
Classification of knotted tori gives some information for arbitrary manifolds and reveals new interesting relations to algebraic topology.

For a smooth manifold $N$ denote by $E^m(N)$ the set of smooth isotopy classes of smooth embeddings $N\to\R^m$.
A description of the set $E^m(S^p\times S^q)$ was known only for $p=0$, $m\ge q+3$ (Haefliger) or for $2m\ge 3p+3q+4$ \cite{Sk02} (in terms of homotopy groups of spheres and Stiefel manifolds).
For $m\ge2p+q+3$ we introduce an abelian group structure on $E^m(S^p\times S^q)$.
We prove that this group and
$$E^m(D^{p+1}\times S^q)\oplus\ker\lambda_U\oplus E^m(S^{p+q})$$
are `isomorphic up to an extension problem'.
Here $\lambda_U:E\to\pi_q(S^{m-p-q-1})$ is the linking coefficient defined on the subset
$E\subset E^m(S^q\sqcup S^{p+q})$ formed by isotopy classes of embeddings whose restriction to each component is unknotted.

This result \cite{Sk11} and its proof have corollaries which, under stronger dimension restrictions, more explicitly describe $E^m(S^p\times S^q)$ in terms of homotopy groups of spheres and Stiefel manifolds.
In the proof we use a recent exact sequence of M. Skopenkov [Sk11].

\bigskip
[M] Manifold Atlas Project, \url{http://www.map.him.uni-bonn.de/Knotted_tori}

[Sk02] A. Skopenkov,{\it On the Haefliger-Hirsch-Wu invariants for embeddings and
immersions,} Comment. Math. Helv. 77 (2002), 78--124.

[Sk11] M. Skopenkov, {\it When is the set of embeddings finite?} Intern. J. Math., to appear, arxiv: math/1106.1878.

[Sk11] A. Skopenkov, {\it Classification of knotted tori,} arxiv: math/1502.04470

\centerline{\uppercase{\bf  Classification of knotted tori}}
\bigskip
\centerline{\bf A. Skopenkov}

\bigskip
The classical Knotting Problem is as follows: {\it for an $n$-manifold $N$
and a number $m$ describe the set $E^m(N)$ of isotopy classes of embeddings $N\to\R^m$}.
Various methods for the investigation of the Knotting Problems (in higher dimensions)
were created by classical mathematicians.
For recent surveys on the Knotting Problem see [RS99, S08, HCEC];

Many interesting examples of embeddings are embeddings $S^p\times S^q\to S^m$, i.e. knotted tori [KT].
A classification of knotted tori is a natural next step (after the link theory
and the classification of embeddings of highly-connected manifolds) towards
classification of embeddings of arbitrary manifolds.
Since the general Knotting Problem is very hard, it is very interesting
to solve it for the important particular case of knotted tori.
Recent classification results for knotted tori [S06, CRS07, CRS12, S11]
give some insight or even precise information concerning arbitrary manifolds
(cf. [S07, S10, PCS]) and reveal new interesting relations to algebraic topology.

We work in the smooth category.
For a smooth manifold $N$ denote by $E^m(N)$ the set of smooth embeddings $N\to\R^m$ up to smooth isotopy.
Denote by $V_{k,l}$ the Stiefel manifold of $l$-frames in $\R^k$.
Consider [S06, S08, PCS, S11]

$\bullet$ a `connected sum' group structure on $E^m(S^q)$ for $m\ge q+3$,

$\bullet$ an `$S^p$-parametric connected sum' group structure on $E^m(S^p\times S^q)$ for $m\ge2p+q+3$  and

$\bullet$ a `connected sum of $q$-spheres together with fields of $p$ normal vectors' or
`$D^p$-parametric connected sum' group structure on $E^m(D^p\times S^q)$ for $m\ge q+3$.

\smallskip
{\bf Theorem.} {\it For $1\le p\le q$ and $m\ge2p+q+3$ groups
$$E^m(S^p\times S^q)\quad\text{and}\quad E^m(D^{p+1}\times S^q)\oplus\ker\lambda_{12}\oplus E^m(S^{p+q})
\quad{are \ adjoint}.$$
Here $\lambda_{12}:E^m_{PL}(S^q\sqcup S^{p+q})\to\pi_q(S^{m-p-q-1})$ is the linking coefficient.
The `isomorphism' from the right to the left is $\tau'\oplus\mu\oplus\#$, where $\tau'$ is the restriction map,
$\#$ is the connected sum map and $\mu$ is defined in [S11].}

\smallskip
{\bf Conjecture.} {\it For $1\le p\le q$ and $m\ge2p+q+3$}
$$E^m(S^p\times S^q)\cong E^m(D^{p+1}\times S^q)\oplus\ker\lambda_{12,0}\oplus E^m(S^{p+q}).$$

For $2m\ge 3q+2p+4$ and $2m\ge 3q+2p+3$ Theorem and Conjecture are proved in [S02] and in [S06], respectively.

Theorem (and its improvements) allows many explicit calculations.
E.g.

$\bullet$ $E^{10}(S^1\times S^5)\cong\Z_4$;

$\bullet$ $E^{11}(S^1\times S^6)$ is adjoint to $\Z_2\oplus\Z\oplus E^{11}(S^7)$, of which $E^{11}(S^7)$ is rank one infinite and splits off;

$\bullet$ $E^{12}(S^1\times S^7)$ is adjoint to $\Z^2\oplus\Z_2^3\oplus E^{12}(S^8)$, of which $E^{12}(S^8)$ is finite and splits off.

A description of $E^m(S^p\times S^q)\otimes\Q$ was obtained in [CRS07, CRS12, S11].
However, looking at the exact sequences there it is non-trivial even to guess a simple description as above.

\bigskip
{\bf References}


[CRS07] M. Cencelj, D. Repov\v s and M. Skopenkov, Homotopy type of the complement
of an immersion and classification of embeddings of tori, Uspekhi Mat. Nauk, 62:5 (2007) 165-166.
English transl: Russian Math. Surveys, 62:5 (2007). arxiv:math/0803.4285

[CRS12] M. Cencelj, D. Repov\v s and M. Skopenkov,
Classification of knotted tori in the 2-metastable dimension, Mat. Sbornik, 203:11 (2012), 1654-1681,
arXiv:math/0811.2745

[HCEC] http://www.map.him.uni-bonn.de/index.php/

High\_codimension\_embeddings:\_classification

[KT] http://www.map.him.uni-bonn.de/index.php/Knotted\_tori

[PCS] http://www.map.him.uni-bonn.de/index.php/Parametric\_connected\_sum

[RS99] D. Repov\v s and A. Skopenkov, New results on embeddings of polyhedra and
manifolds into Euclidean spaces (in Russian), Uspekhi Mat. Nauk 54:6 (1999) 61--109.
English transl.: Russ. Math. Surv. 54:6 (1999), 1149--1196.


[S02] A. Skopenkov, On the Haefliger-Hirsch-Wu invariants for embeddings and immersions,
Comment. Math. Helv. 72 (2002) 78--124.

[S06] A. Skopenkov, Classification of embeddings below the metastable dimension,
\linebreak
arxiv:math/0607422.

[S07] A. Skopenkov, A new invariant and parametric connected sum of embeddings, Fund. Math. 197 (2007) 253--269.
arxiv:math/0509621

[S08] A. Skopenkov, Embedding and knotting of manifolds in Euclidean spaces,
in: Surveys in Contemporary Mathematics, Ed. N. Young and Y. Choi, London Math. Soc. Lect. Notes
347 (2008) 248--342. arxiv:math/0604045

[S10] A.  Skopenkov, Embeddings of $k$-connected $n$-manifolds into $\R^{2n-k-1}$,
Proc. AMS, 138 (2010) 3377-3389, arxiv:math/0812.0263

[S11] M. Skopenkov, When the set of embeddings is finite? submitted, arxiv:math/1106.1878.

\endcomment

\title{Classification of knotted tori}

\author{A. Skopenkov
\footnote{Moscow Institute of Physics and Technology, and Independent University of Moscow.
 E-mail: skopenko@mccme.ru, homepage: \texttt{www.mccme.ru/\~{}skopenko}.
\newline
This work is supported in part by the Russian Foundation for Basic Research Grants 15-01-06302 and 19-01-00169, by Simons-IUM Fellowship and by the D. Zimin's Dynasty Foundation Grant.
I am grateful to P. Akhmetiev, S. Avvakumov, M. Grant, S. Melikhov, M. Skopenkov, A. Sossinsky and A. Zhubr
for useful discussions.
\newline
{\it MSC classes:} 57R40, 57R52. {\it Keywords:} embedding, isotopy, links, knotted tori, Stiefel manifolds.}
}

\date{}

\maketitle

\begin{abstract}
For a smooth manifold $N$ denote by $E^m(N)$ the set of smooth isotopy classes of smooth embeddings $N\to\R^m$.
A description of the set $E^m(S^p\times S^q)$ was known only for $p=q=0$ or for $p=0$, $m\ne q+2$ or for
$2m\ge 2(p+q)+\max\{p,q\}+4$.
(The description was given in terms of homotopy groups of spheres and of Stiefel manifolds.)
For $m\ge2p+q+3$ we introduce an abelian group structure on $E^m(S^p\times S^q)$ and describe this group
`up to an extension problem'.
This result has corollaries which, under stronger dimension restrictions, more explicitly describe $E^m(S^p\times S^q)$.
The proof is based on relations between sets $E^m(N)$  for different $N$ and $m$, in particular,
on a recent exact sequence of M. Skopenkov.
\end{abstract}

\tableofcontents

\section{Introduction and main results}\label{s:intr}

\subsection{Some general motivations}\label{0genr}

This paper is on the classical Knotting Problem: {\it for an $n$-manifold $N$
and a number $m$, classify isotopy classes of embeddings $N\to\R^m$}.
For recent surveys see \cite{Sk06, Sk16c}; whenever possible I refer to these surveys not to original papers.

I consider {\it smooth} manifolds, embeddings and isotopies.
By a classification I mean {\it a readily calculable} classification.\footnote{For a discussion of the adjectives `smooth', `readily calculable', and of embeddings into $\R^m$ vs into $S^m$ see \cite[Remark 2.20]{CS16}, \cite[Remarks 1.1 and 1.2]{Sk16c}.}
Main results are stated in \S\ref{0stat}, \S\ref{0meth} independently of \S\ref{0genr}.

Many interesting examples of embeddings are embeddings $S^p\times S^q\to\R^m$, i.e. {\it knotted tori}.
See references in \cite{Sk16k}.
Since the general Knotting Problem is very hard \cite{Sk16c}, it is very interesting to solve it for the important particular case of knotted tori.
Classification of knotted tori is a natural next step after the Haefliger link theory \cite{Ha66C} and the classification of embeddings of highly-connected manifolds \cite[\S2]{Sk06}, \cite{Sk16e}.
Such a step gives some insight or even precise information concerning embeddings of {\it arbitrary} manifolds   \cite{Sk05i, Sk08, Sk14}, and reveals new interesting relations to algebraic topology.

The Knotting Problem is more accessible for $2m\ge3n+4$, when there are some classical complete readily calculable classifications of embeddings \cite[\S2, \S3]{Sk06}, \cite{Sk16c}.
Cf. (S) of \S\ref{0metca}.

The Knotting Problem is harder for $2m<3n+4$:
if $N$ is a closed manifold that is not a disjoint union of homology spheres,
then until recently no complete readily calculable isotopy classification was known.
Even no description of embeddings `modulo knots' was known, except for $m=6k+1$, $n=4k$ and $(2k-1)$-connected $N$ \cite{BH70, Bo71}.
This is in spite of the existence of many interesting approaches including methods of Haefliger-Wu, Browder-Wall
and Goodwillie-Weiss \cite[\S5]{Sk06}, \cite{Wa70, GW99, CRS04}.


Classification results for $2m<3n+4$ concern links \cite{Ha66C, CFS, Av14, Av17}, embeddings of $d$-connected $n$-manifolds for $2m\ge3n+3-d$ \cite{Sk97, Sk02}, embeddings of 3- and 4-dimensional manifolds \cite{Sk06', Sk05, CS08, CS16, CS16o}, and rational classification of embeddings $S^p\times S^q\to\R^m$ under stronger dimension restriction than $m\ge2p+q+3$ \cite{CRS07, CRS} (see footnote \ref{f:used}).
The methods of those papers essentially use the restrictions present there.

The new ideas allowing to go beyond the above results follow \cite{Sk11} and unpublished work \cite{Sk06''}.
One idea is to find {\it relations} between different sets of (isotopy classes of) embeddings,
invariants of embeddings and geometric constructions of embeddings.
Group structures on sets of embeddings are constructed (following \cite{Sk06''}).\footnote{\label{f:used} These group structures are already used in \cite{CRS07, CRS, Sk11}.}
Then such relations are formulated in terms of exact sequences.
The most non-trivial exact sequence is relation of knotted tori to links and {\it knotted strips}
$D^p\times S^q\to S^m$, i.e. the $\nu\sigma(i\zeta\lambda')$-sequence from the proof of Theorem \ref{t:main} in \S\ref{0modulo}.
This is the main theoretical result \cite[Theorem 1.6]{Sk11} of \cite{Sk11}, which non-trivally extends
\cite[Restriction Lemma 5.2]{Sk06''} and Lemma \ref{l:exact}.a
(see footnote \ref{f:used}).

This theoretical result yielded rational classification (Corollary \ref{t:cornum}.a \cite[Corollary 1.7]{Sk11}).
Still, it was expected that embeddings $S^p\times S^q\to\R^m$ are hard to classify for $m\ge2p+q+3>q+3$.
Such a classification is the main result of this paper.
The main idea of this paper is, in some sense, {\it a reduction} of classification of knotted tori to classification of links and knotted strips (rather than {\it a relation} as in \cite{Sk11}).
This is obtained by discovering new relations between different sets of embeddings, and, more importantly,
{\it connections} between such relations, formulated in terms of diagrams involving the exact sequences,
see \S\ref{0modulo}.
These ideas are hopefully interesting in themselves.


\subsection{Statements of main results}\label{0stat}

For a manifold $N$ let $E^m(N)$ be the set of isotopy classes of embeddings $N\to S^m$.

The `connected sum' abelian group structure on $E^m(S^q)$ and on $E^m(S^q\sqcup S^n)$
was defined for $m-3\ge q,n$ by Haefliger \cite{Ha66A, Ha66C}.
Abelian group structures on $E^m(D^p\times S^q)$ for $m\ge q+3$ and on $E^m(S^p\times S^q)$ for $m\ge2p+q+3$ are defined analogously to the well-known case $p=0$.
The sum operation on $E^m(D^p\times S^q)$ is `connected sum of $q$-spheres together with normal
$p$-framings' or `$D^p$-parametric connected sum'.
The sum operation on $E^m(S^p\times S^q)$ is `$S^p$-parametric connected sum', cf. \cite{Sk05i, Sk08, MAP}, \cite[Theorem 8]{Sk14}.
See accurate definitions in \S\ref{0stand}; cf. Remarks \ref{r:prwo} and \ref{r:dire}.

Our main results describe the group $E^m(S^p\times S^q)$ up to an extension problem.


\smallskip
{\bf Definitions of $[\cdot]$, the `embedded connected sum' or `local knotting' action
$$\#:E^m(N)\times E^m(S^n)\to E^m(N),$$ and of $E_\#^m(N),q_\#$.}
By $[\cdot]$ we denote the isotopy class of an embedding or the homotopy class of a map.

Assume that $m\ge n+2$ and $N$ is a closed connected oriented $n$-manifold.
Represent elements of $E^m(N)$ and of $E^m(S^n)$ by embeddings $f:N\to S^m$ and $g:S^n\to S^m$ whose images are contained in disjoint balls.
Join the images of $f,g$ by an arc whose interior misses the images.
Let $[f]\#[g]$ be the isotopy class of the {\it embedded connected sum} of $f$ and $g$ along this arc,
cf. \cite[Theorem 1.7]{Ha66A}, \cite[Theorem 2.4]{Ha66C}, \cite[\S1]{Av14}.

For $N=S^q\sqcup S^n$ this construction is made for an arc joining $f(S^n)$ to $g(S^n)$.

For $m\ge n+2$ the operation $\#$ is well-defined, i.e. does not depend on the choice of the arc and of $f,g$ inside their isotopy classes.\footnote{This is proved analogously to the case $X=D^0_+$ of the Standardization Lemma \ref{l:stand}.b below, because the construction of $\#$ has an analogue for isotopy, cf. \S\ref{0progro}.}
Clearly, $\#$ is an action.

Let $E_\#^m(N)$ be the quotient set of $E^m(N)$ by this action and $q_{\#}:E^m(N)\to E^m_\#(N)$ the quotient map.
A group structure on $E^m_\#(S^p\times S^q)$ is well-defined by $q_{\#}f+q_{\#}f':=q_{\#}(f+f')$,
$f,f'\in E^m(S^p\times S^q)$,  because $(f\#g)+f'=f+(f'\#g)=(f+f’)\#g$ by definition of `$+$' in \S\ref{0stand}.

\smallskip
The following result reduces description of $E^m(S^p\times S^q)$ to description of $E^m(S^{p+q})$ and of
$E^m_\#(S^p\times S^q)$, cf. \cite{Sc71}, \cite[end of \S1]{CS08}.

\begin{Lemma}[Smoothing; proved in \S\ref{0prosmho}]\label{t:smo}
For $m\ge2p+q+3$ we have $E^m(S^p\times S^q)\cong E^m_\#(S^p\times S^q)\oplus E^m(S^{p+q})$ .
\end{Lemma}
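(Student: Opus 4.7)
The plan is to construct a group homomorphism $\sigma:E^m(S^p\times S^q)\to E^m(S^{p+q})$ that splits the connected-sum action, in the sense that $\sigma([f]\#[g])=\sigma[f]+[g]$ for every $[f]\in E^m(S^p\times S^q)$ and every $[g]\in E^m(S^{p+q})$. Given such a $\sigma$, the combined map $(q_\#,\sigma):E^m(S^p\times S^q)\to E_\#^m(S^p\times S^q)\oplus E^m(S^{p+q})$ will be the required isomorphism: the equivariance makes $\sigma$ a retraction onto a summand, so $(q_\#,\sigma)$ is surjective, and $q_\#|_{\ker\sigma}$ is injective because any two $\sigma$-trivial embeddings lying in the same $\#$-orbit differ by some $[g]$, which $\sigma$ detects.

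\textbf{Construction of $\sigma$.} Fix basepoints $x_0\in S^p$, $y_0\in S^q$ and set $X:=(S^p\times y_0)\cup(x_0\times S^q)\cong S^p\vee S^q$, so that the complement $(S^p\times S^q)\setminus X$ is an open $(p+q)$-disk $D$. Given a representative $f:S^p\times S^q\to S^m$, invoke the Standardization Lemma \ref{l:stand} (whose hypotheses are satisfied under $m\ge 2p+q+3$) to ambient-isotope $f$ so that $f|_X$ together with a chosen tubular neighborhood of $f(X)$ in $f(S^p\times S^q)$ matches a fixed standard model. The closure of the complement of this tube inside $f(S^p\times S^q)$ is then an embedded $(p+q)$-disk; cap it off by a standard disk in $S^m\setminus(\text{standard tube around }f(X))$ to obtain an embedded sphere $S^{p+q}\to S^m$, whose isotopy class is declared to be $\sigma[f]$.

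\textbf{Key verifications and main obstacle.} The crux is well-definedness of $\sigma$, i.e.\ independence of the standardizing isotopy and the capping disk. This is where the metastable inequality $m\ge2p+q+3$ is essential: it is exactly the range in which the Standardization Lemma delivers uniqueness of the standardizing isotopy up to ambient isotopy rel.\ a collar, while the high codimension $m-(p+q)\ge p+3$ makes the complement of the standard tube around $f(X)$ sufficiently connected for any two capping disks to be ambient-isotopic. Establishing this uniqueness, which amounts to killing obstructions in homotopy groups that become trivial only once $m\ge2p+q+3$, is the main technical obstacle; everything else is formal.

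\textbf{Equivariance and conclusion.} Once $\sigma$ is defined, the identity $\sigma([f]\#[g])=\sigma[f]+[g]$ follows by choosing the arc realizing the connected sum to enter $f(S^p\times S^q)$ through a point of the disk $f(D)$; the connected sum then only modifies the capped disk, and a standard isotopy shows this corresponds to adding $[g]$ in $E^m(S^{p+q})$ with its Haefliger group structure. This produces the splitting, giving the claimed decomposition $E^m(S^p\times S^q)\cong E_\#^m(S^p\times S^q)\oplus E^m(S^{p+q})$.
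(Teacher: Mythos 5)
Your high-level plan — construct a retraction $\sigma:E^m(T^{p,q})\to E^m(S^{p+q})$ with $\sigma([f]\#[g])=\sigma[f]+[g]$, and then show $q_\#\oplus\sigma$ is an isomorphism — is the same as the paper's. But your realization of $\sigma$ has two serious problems, the second of which is fatal.

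First, your construction requires ambient-isotoping $f$ so that it is standard on a tubular neighborhood of the whole wedge $X=(S^p\times y_0)\cup(x_0\times S^q)$. The Standardization Lemma \ref{l:stand} does not give this: it only makes $f$ equal to $\inc$ on $S^p\times D^q_-$, i.e.\ on a tube around $S^p\times\{-1_q\}$, and it says nothing about $f$ on $x_0\times S^q$. Worse, the standardization you want is in general impossible: the restriction $f|_{x_0\times S^q}$ can be a knotted sphere (for $p=0$ this is immediate; for $p\ge1$ take a knot $g\in E^m(S^q)$ with trivial normal bundle, which exists when $3q+2\le 2m<3q+4$ and $m\ge 2p+q+3$, and thicken it to $D^{p+1}\times S^q$ and restrict to the boundary). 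A knotted $f|_{x_0\times S^q}$ cannot be pushed onto a standard model, so your $\sigma$ is not defined on all of $E^m(T^{p,q})$. The paper avoids this by defining $\overline\sigma$ as surgery along $S^p\times *$ only: standardize so that $f'=\inc$ on $S^p\times D^q_-$, then glue the standard cap $i'(D^{p+1}\times\partial D^q_+)$ to $f'|_{S^p\times D^q_+}$. This does not touch $f|_{x_0\times S^q}$ and needs only the Standardization Lemma exactly as stated.

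Second, and more importantly, you assert that $\sigma$ is a group homomorphism and then say ``everything else is formal,'' but you never prove additivity. This is the genuinely hard step. The paper singles it out explicitly (``Lemma \ref{l:smohom} is known \cite[Proposition 5.6]{CRS12} except for the non-trivial assertion that $\overline\sigma$ is a homomorphism'') and devotes the longest portion of \S\ref{0prosmho} to it, using a careful corner-smoothing argument with the disks $\Delta_\pm$ and $\Sigma_\pm$; the footnote there even records that an earlier version of this very argument was wrong. Your equivariance observation $\sigma(f\#g)=\sigma(f)+g$ (which, combined with $q_\#\circ\cs=0$, gives the set bijection) does not imply that $\sigma$ respects the parametric-connected-sum group structure on $E^m(T^{p,q})$, and without that the decomposition $E^m(T^{p,q})\cong E^m_\#(T^{p,q})\oplus E^m(S^{p+q})$ is not established as an isomorphism of groups. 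This gap must be filled before the argument is complete.
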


The isomorphism of Lemma \ref{t:smo} is $q_{\#}\oplus\overline{\sigma}$, where $\overline{\sigma}$ is `surgery of $S^p\times*$' defined in \S\ref{0prosmho}.
It has the property $(q_{\#}\oplus\overline{\sigma})(f\#g)=q_{\#}(f)\oplus(\overline{\sigma}(f)+g)$ for each $f\in E^m(S^p\times S^q)$, $g\in E^m(S^{p+q})$.

Denote by $V_{s,t}$ the Stiefel manifold of $t$-frames in $\R^s$.
Identify $V_{s,1}$ with $S^{s-1}$.

Known results easily imply (see Corollary \ref{t:corlam}.a) that
$$E^m_\#(S^p\times S^q)\cong \pi_q(V_{m-q,p+1})\quad\text{for}\quad 2m\ge2p+3q+4.$$
Our main result generalizes this for $m\ge 2p+q+3$.

For $m\ge n+3$ denote  by

$\bullet$ $\lambda=\lambda^m_{q,n}:E^m(S^q\sqcup S^n)\to\pi_q(S^{m-n-1})$ the linking coefficient that is
the homotopy class of the first component in the complement to the second component, see accurate definition in \cite{Sk16h}, \cite[\S3]{Sk06}.


$\bullet$ $E^m_U(S^q\sqcup S^n) \subset E^m(S^q\sqcup S^n)$ the subset formed by the isotopy classes of embeddings
whose restriction to {\it each} component is unknotted.

$\bullet$ $K^m_{q,n}:=\ker\lambda\cap E^m_U(S^q\sqcup S^n)$; see geometric description in \cite[\S3, Definition of $\widehat{DM}^m_{p,q}$]{Sk09}.

\begin{Theorem}[proved in \S\ref{0modulo}]\label{t:main}
For $m\ge2p+q+3$ the group $E^m(D^{p+1}\times S^q)$ has a subgroup $X=X^m_{p,q}$ such that $E^m_\#(S^p\times S^q)$ has a subgroup isomorphic to $X\oplus K^m_{q,p+q}$ whose quotient is isomorphic to $E^m(D^{p+1}\times S^q)/X$.

Moreover, there are maps forming the following commutative diagram, in which the horizontal sequence is exact:
$$\xymatrix{
& X \ar[r]^{\subset}  & E^m(D^{p+1}\times S^q) \ar[dr]^{q_X} \ar[d]_{q_\#r} &         \\
0 \ar[r] \ar[ur] & X\oplus K^m_{q,p+q} \ar[r]_{\overline\mu\oplus \sigma_\#} & E^m_\#(S^p\times S^q) \ar[r]_{\overline\nu} & E^m(D^{p+1}\times S^q)/X \ar[r] & 0
 }.$$
The subgroup $X^m_{p,q}$ is finite unless $q=4k-1$ and $m=6k+p$ for some $k\ge p/2+1$, and $X^{6k+p}_{p,4k-1}$ is the sum of $\Z$ and a finite group for such $k$.
\end{Theorem}


The subgroup $X^m_{p,q}$ is the kernel of the restriction map to $E^m(D^p\times S^q)$.
The maps $\sigma_\#$, $\overline\mu$ and $\overline\nu$ are defined in \S\ref{0modulo}, $r$ is the restriction map to $E^m(S^p\times S^q)$, and $q_X$ is the quotient map.
For relation between $\pi_q(V_{m-q,p+1})$ and $E^m(D^{p+1}\times S^q)$ see Theorem \ref{l:xitaurho}.

\begin{Conjecture} \label{t:conj}
For
$m\ge2p+q+3$
$$E^m(S^p\times S^q)\cong E^m(D^{p+1}\times S^q)\oplus K^m_{q,p+q}\oplus E^m(S^{p+q}).$$
\end{Conjecture}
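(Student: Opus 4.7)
The plan is to combine the Smoothing Lemma \ref{t:smo} with the filtration of Theorem \ref{t:main}, and to upgrade the abstract extension data to an actual isomorphism by constructing natural geometric sections.

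First, by Lemma \ref{t:smo} the conjecture reduces to
$$E^m_\#(S^p\times S^q)\cong E^m(D^{p+1}\times S^q)\oplus K^m_{q,p+q},$$
since the $E^m(S^{p+q})$-summand splits off immediately. Theorem \ref{t:main} asserts that the two sides of this reduced isomorphism are associated to one and the same abelian group via filtrations of length four, so the conjecture is equivalent to a simultaneous and compatible splitting of the three short exact sequences provided by those filtrations.

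Second, I would extract from the proof of Theorem \ref{t:main} the two filtration maps geometrically and construct homomorphic sections. The $K^m_{q,p+q}$-subgroup of $E^m_\#(S^p\times S^q)$ is expected to arise from the restriction $f\mapsto(f|_{S^p\times*},\,f|_{*\times S^q})$, which automatically lands in $E^m_U(S^q\sqcup S^{p+q})\cap\ker\lambda$; a section can then be built by joining the two unknotted, trivially-linked spheres along the standard $S^p$-parametric connected-sum tube. Dually, the residual quotient should be $E^m(D^{p+1}\times S^q)$, realised by capping off the $S^p$-factor along a standard $(p+1)$-disk in $S^m$, with a section given by restriction to the boundary $S^p\times S^q$. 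Additivity of each section with respect to the $S^p$- and $D^{p+1}$-parametric connected sums of \S\ref{0stand} should follow from the disjoint-ball model of those sums.

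The principal obstacle will be not the individual splittings but their simultaneous compatibility: one must arrange the three sections so that their combination is a single homomorphism $E^m(D^{p+1}\times S^q)\oplus K^m_{q,p+q}\to E^m_\#(S^p\times S^q)$ inverse to the natural surjections read off from Theorem \ref{t:main}. This joint compatibility is not formally forced by the extension data, and controlling it will likely require input from M.~Skopenkov's exact sequence to identify and kill the relevant obstruction, especially in the new dimension range $2m\le 3q+2p+3$ to which the conjecture genuinely applies.
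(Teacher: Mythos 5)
You have attempted to prove a statement that the paper itself does not prove: Conjecture~\ref{t:conj} is an open conjecture, and the paper only establishes the strictly weaker Theorem~\ref{t:main}, namely that $E^m_\#(S^p\times S^q)$ and $E^m(D^{p+1}\times S^q)\oplus K^m_{q,p+q}$ share a common associated graded group for filtrations of length four. Remark~\ref{r:conj}.b lists the only dimension ranges in which the conjecture is known (via $(S)$, $(D)$, $(L)$ and Corollary~\ref{t:corlam}.a, or sporadic cases), precisely because the upgrade from ``same associated graded'' to ``isomorphic'' is exactly the unresolved extension problem.

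Your proposed sections are, in substance, the candidate map the paper already names in Remark~\ref{r:conj}.c: restriction $r$ on the $E^m(D^{p+1}\times S^q)$ factor, the map $\sigma$ restricted to $K^m_{q,p+q}$ (your ``joining unknotted, trivially-linked spheres along the parametric tube''), and local knotting $\cs$ on $E^m(S^{p+q})$. The paper explicitly warns that $r\oplus\sigma|_{K^m_{q,p+q}}\oplus\cs$ is not known to be an isomorphism even in cases where the conjecture itself is known by other arguments (e.g.\ $p=0$, where it is verified only for $2m=3q+3$ in Theorem~\ref{t:nonis}.a), and that not even its rationalization is shown to be an isomorphism by the proof of Theorem~\ref{t:main}. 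So the ``principal obstacle'' you identify --- simultaneous compatibility of the three splittings --- is not a technical step to be finessed by reusing M.~Skopenkov's exact sequence; it is the entire content of the conjecture, and the exact sequence has already been used to its full strength in the proof of Theorem~\ref{t:main} without yielding such compatibility. Indeed, Remark~\ref{r:asso}.b shows the relevant square ($q_\#\sigma\zeta$ versus $q_\#r\mu'$) genuinely fails to commute already for $p=0$, $q=4l-1$, $m=6l$, which is a concrete manifestation of why the naive splitting argument cannot go through as stated. Your plan therefore reproduces the known reduction (Remark~\ref{r:conj}.a via the Smoothing Lemma) and the known candidate map, but offers no new mechanism to close the gap that makes this a conjecture rather than a theorem.
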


This is equivalent to
$E^m_\#(S^p\times S^q)\cong E^m(D^{p+1}\times S^q)\oplus K^m_{q,p+q}$ by the Smoothing Lemma \ref{t:smo}.
For more discussion see Remark \ref{r:conj}.

Known cases of Theorem \ref{t:main}, the Smoothing Lemma \ref{t:smo} (and of Conjecture \ref{t:conj}) are listed in Remark \ref{r:kno}.a.
In particular, these are new results only for
$$1\le p<q\quad\text{and}\quad 2m\le 3q+2p+3.$$
Analogous remark holds for the
corollaries from \S\ref{0meth}
(under stronger dimension restrictions they describe $E^m(S^p\times S^q)$ more explicitly).


{\it Plan of the paper} should be clear from the contents of the paper and the following diagram.
Subsections are independent on each and other (except maybe for a few references which could be ignored)
unless joined by a sequence of arrows.
$$\xymatrix{ & {\ref{0stat}} \ar[r] \ar[d]  & {\ref{0stand}} \ar[d] \ar[r] \ar@(r,dl)[rr] \ar@(dr,dl)[rrr]
& {\ref{0prosta}} & {\ref{0progro}} & {\ref{0prosmho}}  \\
{\ref{0metca}} \ar@(dr,dl)[rrr] & {\ref{0meth}} \ar[l] \ar[r] & {\ref{0modulo}} \ar[r] & {\ref{0newpro}} \ar[r]
& {\ref{0prolex}}
}$$

\subsection{Corollaries}\label{0meth}

Denote by $TG$ the torsion subgroup of an abelian group $G$.

\begin{Corollary}\label{t:cornum}
Assume that $m\ge 2p+q+3$.

$$\text{(a) \cite[Corollary 1.7]{Sk11}} \quad E^m(S^p\times S^q)\otimes\Q\cong[\pi_q(V_{m-q,p+1})\oplus E^m(S^q)\oplus K^m_{q,p+q}\oplus E^m(S^{p+q})]\otimes\Q.$$
$$(b)\quad |E^m(S^p\times S^q)|=|E^m(D^{p+1}\times S^q)|\cdot|K^m_{q,p+q}|\cdot|E^m(S^{p+q})|$$
(more precisely, whenever one part is finite, the other is finite and they are equal).
$$(c)\quad |TE^m(S^p\times S^q)|=|TE^m(D^{p+1}\times S^q)|\cdot|TK^m_{q,p+q}|\cdot|TE^m(S^{p+q})|,$$
unless $m=6k+p$ and $q=4k-1$ for some $k$.

(d) For the diagram of Theorem \ref{t:main} any $\Z^s$-direct summand of $K^m_{q,p+q}$ is mapped under $\sigma_\#$ to a $\Z^s$-direct summand in $E^m_\#(S^p\times S^q)$.

(e) For the diagram of Theorem \ref{t:main} any $\Z^s$-direct summand of $E^m(D^{p+1}\times S^q)/X$ is the image of $\Z^s$-direct summands $\Delta\subset E^m(D^{p+1}\times S^q)$ and $\Sigma\subset E^m_\#(S^p\times S^q)$
such that $r\Delta=\Sigma$.
\end{Corollary}

Parts (a,b,c) are simplified versions of Conjecture \ref{t:conj}.
Part (a) follows by Theorem \ref{t:main} and the isomorphism (DF) of \S\ref{0metca} \cite[Lemma 2.15]{CFS}.
Parts (b) and (e) follow by Theorem \ref{t:main} in a standard way.
Part (c) follows from parts (d,e) and Theorem \ref{t:main}.
Part (d) is proved in \S\ref{0metca}.

\smallskip
{\bf Comment.}
(a) The right-hand side groups of Corollary \ref{t:cornum}.a are known by Theorem \ref{t:spli} below and
\cite[Theorems 1.1, 1.9 and Lemma 1.12]{CFS}.
Thus part (a) allows calculation of $\rk E^m(S^p\times S^q)$.
This is known \cite[Corollary 1.7]{Sk11}, so Corollary \ref{t:cornum}.a is not a new result.
However, our deduction of Corollary \ref{t:cornum}.a is interesting because it uses Theorem \ref{t:main} instead of  information on the groups involved in \cite[\S4, proof of Corollary 1.7]{Sk11}; in this sense our deduction
explains why the isomorphism holds.

(b) The following diagram shows that excluding the case `$m=6k+p$ and $q=4k-1$ for some $k$' is essential for our proof of Corollary \ref{t:cornum}.c.
$$\xymatrix{  & & \Z \ar[d]^{i_2\rho_2} \ar[dr]^{\rho_2}   \\
0 \ar[r] & \Z  \ar[ur]^2 \ar[r]_{1\oplus0} & \Z\oplus\Z_2 \ar[r]_{0\oplus1} & \Z_2 \ar[r] &0
}$$

{\bf Definition of $\Z_{(s)}$ and the maps $\pr_k$,}
$$\pi_q(V_{m-q,p+1}) \overset{\tau=\tau^m_{p,q}}\to E^m(D^{p+1}\times S^q)
\overset{r=r^m_{p,q}}\to E^m(S^p\times S^q).$$
Denote by $\Z_{(s)}$ the group $\Z$ for $s$ even and $\Z_2$ for $s$ odd.
This notation should not be confused with the notation for localization.

Denote by $\pr_k$ the projection of a Cartesian product onto the $k$-th factor.

Denote by $r$ the restriction-induced map.

Represent an element of $\pi_q(V_{m-q,p+1})$ by a smooth map $x:S^q\to V_{m-q,p+1}$.
By the exponential law this map can be considered as a map $x:\R^{p+1}\times S^q\to \R^{m-q}$.
The latter map can be normalized to give a map $\widehat x:D^{p+1}\times S^q\to D^{m-q}$.
Let $\tau[x]$ be the isotopy class of the composition
$D^{p+1}\times S^q\overset{\widehat x\times\pr_2}\to D^{m-q}\times S^q\overset{\inc}\to S^m$, where
$\inc$ is the standard embedding (see accurate definition in \S\ref{0stand}) \cite{Sk16k}, \cite[\S6]{Sk06}.
Clearly, $\tau$ is well-defined and is a homomorphism.

\smallskip
In this paper the sign $\circ$ of the composition is often omitted.

\begin{Corollary}\label{t:corlam} Assume that $m\ge 2p+q+3$.

(a) If
$2m\ge2p+3q+4$,
then
$q_{\#}r\tau:\pi_q(V_{m-q,p+1})\to E^m_\#(S^p\times S^q)$ is an isomorphism.

(b) If $2m\ge p+3q+4$, then $E^m_\#(S^p\times S^q)$ and $\pi_q(V_{m-q,p+1})$ have isomorphic subgroups with isomorphic quotients.

(b') If $1\le p<k$,
then $E^{6k-p}_\#(S^p\times S^{4k-p-1})\cong\Z\oplus G_{k,p}$ for a certain
group $G_{k,p}$ such that $G_{k,p}$ and $\pi_{4k-p-1}(V_{2k+1,p+1})$ have isomorphic subgroups with isomorphic quotients.


(c) If $2m\ge 3q+4$, then $E^m_\#(S^p\times S^q)$ has a subgroup isomorphic to $\pi_{p+2q+2-m}(V_{M+m-q-1,M})$,
whose quotient and $\pi_q(V_{m-q,p+1})$ have isomorphic subgroups with isomorphic quotients.

(d) If $2m=3q+3$, then $E^m_\#(S^p\times S^q)$ has a subgroup isomorphic to $\pi_{p+2q+2-m}(V_{M+m-q-1,M})$,
whose quotient has a subgroup isomorphic to $\Z_{(m-q-1)}$, whose quotient and $\pi_q(V_{m-q,p+1})$
have isomorphic subgroups with isomorphic quotients.
\end{Corollary}

Corollary \ref{t:corlam}.a is deduced from earlier known results, and Corollaries \ref{t:corlam}.bb'cd are deduced from our results, at the end of \S\ref{0metca}.
See a direct proof of Corollary \ref{t:corlam}.a in \S\ref{0newpro}.

\smallskip
{\bf Comment.}
The case $2m=2p+3q+3$ is considered in \cite{Sk06''}.
Statements \cite[Main Theorem 1.4.AD,PL]{Sk06''} in the first two arXiv versions are false;
\cite[Main Theorem 1.3]{Sk06''} and \cite[Theorem 3.11]{Sk06} are correct.
The mistakes are corrected in the present paper except in the case $(m,p,q)=(7,1,3)$ for which see \cite{CS16}.
The mistakes were in the relation $\tau_p(w_{l,p})=2\omega_p$
of \cite[the Relation Theorem 2.7, the Almost Smoothing Theorem 2.3]{Sk06''}.

I conjecture that the groups of Corollary \ref{t:corlam}.b are in fact isomorphic.
This is a particular case of Conjecture \ref{t:conj}.
This case could hopefully be proved using ideas of \cite{Sk06''}.



\smallskip
Recall that our results are new only for $1\le p<q$ and $2m\le 3q+2p+3$ (by Remark \ref{r:kno}.a).
The smallest $m$ for which there are $p,q$ such that these inequalities hold and $m\ge 2p+q+3$
are $m=10,11,12$.
Then $p=1$ and $q=m-5$.
Hence by the Smoothing Lemma \ref{t:smo}, Theorem \ref{t:main}, Corollaries \ref{t:corlam}.b,b',c,d
and \cite{Pa56, Ha66A}

$\bullet$ $|E^{10}(S^1\times S^5)|=|E^{10}_\#(S^1\times S^5)|=4$. Cf. \cite[Example 1.4]{Sk11}.

$\bullet$ $E^{11}_\#(S^1\times S^6)\cong \Z_2\oplus\Z$ and
$E^{11}(S^1\times S^6)\cong\Z_2\oplus\Z\oplus E^{11}(S^7)$, of which $E^{11}(S^7)$ is rank one infinite.

$\bullet$ $E^{12}_\#(S^1\times S^7)\cong \Z^2\oplus G$, where $|G|$ is a divisor of 8,
and $E^{12}(S^1\times S^7)\cong \Z^2\oplus G\oplus E^{12}(S^8)$, of which $E^{12}(S^8)$ is finite.

\subsection{Calculations, proofs of corollaries, remarks}\label{0metca}

{\it The group $\pi_s(V_{m,n})$ is calculated} for many cases, see e.g. \cite{Pa56}, \cite[Lemma 1.12]{CFS}.

{\bf (V)} $\pi_s(V_{m,n})=0$ for $s>m-n$.

{\bf (V')} $\pi_{m-n}(V_{m,n})\cong \Z_{(m-n)}$ for $n>1$.

{\bf (VF)} $\pi_s(V_{m,n})$ is finite if and only if either $s=m-n$ is even, or $s=m-1$ is odd, or $4| s+1\ne m$ and $\frac s2+1<m<n+\frac s2+1$.

{\it The group $E^m(S^n)$ is calculated} for some cases when $m\ge n+3$ \cite{Ha66A, Mi72}.
In particular,

{\bf (S)} $E^m(S^n)=0$ for $2m\ge 3n+4$.

{\bf (S')} $E^m(S^n)\cong \Z_{(m-n-1)}$ for $2m=3n+3$.

{\bf (SF)} $E^m(S^n)$ is finite if and only if $n\equiv3\mod4$
and $2m<3n+4$ \cite[Corollary 6.7]{Ha66A}.

\begin{Theorem}\label{t:spli}
For $m-3\ge q,n$  we have $E^m_U(S^q\sqcup S^n)\cong \pi_q(S^{m-n-1})\oplus K_{q,n}$.
\cite[Theorem 2.4 and the text before Corollary 10.3]{Ha66C}
\end{Theorem}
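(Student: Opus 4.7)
The plan is to construct a group-theoretic section $s:\pi_q(S^{m-n-1})\to E^m_U(S^q\sqcup S^n)$ of the restriction $\lambda|_{E^m_U}$, whose kernel is $K_{q,n}$ by definition. Such an $s$ immediately yields the splitting $E^m_U(S^q\sqcup S^n)\cong\pi_q(S^{m-n-1})\oplus K_{q,n}$.

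First I would fix a standard unknotted embedding $e_n:S^n\hookrightarrow S^m$ and use the decomposition $S^m=(S^n\times D^{m-n})\cup(D^{n+1}\times S^{m-n-1})$ to identify the complement $S^m\setminus e_n(S^n)$ with an open neighborhood of the meridian sphere $\{0\}\times S^{m-n-1}\subset D^{n+1}\times S^{m-n-1}$. Under this identification $\pi_q(S^m\setminus e_n(S^n))\cong\pi_q(S^{m-n-1})$ canonically. Given $\alpha\in\pi_q(S^{m-n-1})$ represented by a smooth map $a:S^q\to S^{m-n-1}$, I would produce an embedding $e_q:S^q\hookrightarrow S^m\setminus e_n(S^n)$ of the form $e_q(x)=(\beta(x),a(x))\in D^{n+1}\times S^{m-n-1}$, with $\beta:S^q\to D^{n+1}$ chosen generically so that $e_q$ is injective, and then appeal to Haefliger's realization result (\cite[Theorem 2.4]{Ha66'}) to arrange that $e_q$ is unknotted in $S^m$. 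Setting $s(\alpha):=[(e_n,e_q)]\in E^m_U$ gives a class with $\lambda\circ s=\id$ by construction.

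Next I would verify that $s$ is well-defined and a group homomorphism. Well-definedness uses that if $a,a'$ are homotopic representatives of $\alpha$, the resulting embeddings $e_q,e_q'$ are regularly homotopic in the complement, and in codimension $m-q\ge 3$ such a regular homotopy lifts to an ambient isotopy relative to $e_n(S^n)$ via Hudson's concordance-implies-isotopy theorem. Additivity follows by describing $\alpha_1+\alpha_2$ via the pinch map $S^q\to S^q\vee S^q$ and observing that $s(\alpha_1+\alpha_2)$ is isotopic to the embedded connected sum of $s(\alpha_1)$ and $s(\alpha_2)$ along an arc disjoint from $e_n(S^n)$, which is by definition the group sum in $E^m_U$.

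The main obstacle is the simultaneous construction, for arbitrary $\alpha$, of an embedded representative $e_q$ that is also unknotted in $S^m$. For $q$ close to $m$ the map $a:S^q\to S^{m-n-1}$ is not approximable by an embedding into the meridian sphere alone, so one must use the extra $(n+1)$-dimensional fiber direction of the product $D^{n+1}\times S^{m-n-1}$ to separate self-intersections; once $e_q$ is an embedding, its unknottedness is a separate assertion whose proof in codimension $\ge 3$ is subsumed in \cite[Theorem 2.4]{Ha66'} and used here as a black box.
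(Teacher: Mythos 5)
Your overall strategy is the right one: build a homomorphic section $s$ of $\lambda|_{E^m_U}$, observe that $K_{q,n}=\ker(\lambda|_{E^m_U})$ by definition, and apply the splitting lemma. In the paper this role is played by the Zeeman homomorphism $\zeta$ defined in \S\ref{0modulo}, which satisfies $\lambda\zeta=\id\pi_q(S^{m-n-1})$; the theorem itself is simply cited to \cite{Ha66'} rather than reproved. However, there is a genuine gap in your construction of the section, and two of the supporting steps are misapplied.

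The core problem is the construction of $e_q$. Choosing $\beta:S^q\to D^{n+1}$ ``generically'' does not guarantee that $(\beta,a):S^q\to D^{n+1}\times S^{m-n-1}$ is injective throughout the range $m\ge q+3$: a generic smooth map of $S^q$ into an $m$-manifold is an embedding only when $2q<m$, which fails for much of the allowed range. And even once $e_q$ is an embedding, there is no reason it should be unknotted in $S^m$ (for $2m<3q+4$ there exist smoothly knotted $q$-spheres in $S^m$), and an appeal to \cite[Theorem 2.4]{Ha66'} --- a splitting theorem for link groups, not an unknotting criterion for a single sphere --- does not address this. Both difficulties vanish if, as in the definition of $\zeta$, one takes $\beta$ to be a fixed \emph{embedding} $\inc_{n,q}:S^q\hookrightarrow S^n\subset D^{n+1}$ (this needs $q\le n$, which is the hypothesis under which $\zeta$ is defined and which holds in every application here since $n=p+q$): then $(\beta,a)$ is automatically injective regardless of $a$, and it is unknotted in $S^m$ because shrinking the $S^{m-n-1}$-coordinate to a point gives an isotopy to $\inc_{n,q}:S^q\subset S^n\subset S^m$. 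Your well-definedness step is also off: a regular homotopy is not a concordance, so Hudson's concordance-implies-isotopy theorem does not apply. With $\beta=\inc_{n,q}$ fixed, a homotopy $a_t$ from $a$ to $a'$ gives a family of embeddings $(\beta,a_t)$, i.e.\ an honest isotopy, and there is nothing to cite.
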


{\it The group $K^m_{q,p+q}$ (or, equivalently,
$E^m_U(S^q\sqcup S^{p+q})$) is calculated}
in terms of homotopy groups of spheres and Whitehead products \cite{Ha66C, Sk09}, \cite[Theorem 1.9]{CFS}.
In particular,

{\bf (L)} $K^m_{q,p+q}=0$ for $2m\ge3q+p+4$;

{\bf (L')} $K^m_{q,p+q}\cong\pi_{p+2q+2-m}(V_{M+m-q-1,M})$ for $m\ge\frac{2p+4q}3+2$ and $M$ large.

This holds by the Haefliger Theorems \cite[Theorems 3.1 and 3.6]{Sk06}.
Also (L) follows by (L').
The isomorphism of (L') from the left to the right is defined in \cite{Ha66A}.

\smallskip
{\it The group $E^m(D^{p+1}\times S^q)$ can be calculated} using Theorem \ref{l:xitaurho} below.
E.g. by Theorem \ref{l:xitaurho}, (S), (S') and since for $2m\ge 3q+2$ the normal bundle of any embedding
$S^q\to\R^m$ is trivial \cite{Ke59}, we have the following.

{\bf (D)} $\tau:\pi_q(V_{m-q,p+1})\to E^m(D^{p+1}\times S^q)$ is an isomorphism for $2m\ge 3q+4$.

{\bf (D')} $E^m(D^{p+1}\times S^q)$ has a subgroup $\Z_{(m-q-1)}$ whose quotient is $\pi_q(V_{m-q,p+1})$ for $2m=3q+3$.

{\bf (DF)} $E^m(D^{p+1}\times S^q)\otimes\Q\cong[\pi_q(V_{m-q,p+1})\oplus E^m(S^q)]\otimes\Q$
\cite[Lemma 2.15]{CFS}.

A {\bf $p$-framing} in a vector bundle is a system of $p$ ordered orthogonal normal unit vector fields on the zero section of the bundle.

\begin{Theorem}\label{l:xitaurho}
For $m\ge q+3$ the following sequence is exact:
$$\ldots\to E^{m+1}(S^{q+1})\overset\xi\to \pi_q(V_{m-q,p+1})\overset\tau\to E^m(D^{p+1}\times S^q)
\overset\rho\to E^m(S^q)\to\ldots$$
Here $\rho$ is the restriction-unduced map and $\xi[f]$ is the obstruction to the existence of a normal $(p+1)$-framing of an embedding $f:S^{q+1}\to S^{m+1}$.
\end{Theorem}

{\bf Accurate definition of the map $\xi$.}
We follow \cite[Sketch of proof of Theorem 2.5 in p. 7]{Sk11}, cf. \cite[Theorem 2.14]{CFS}, \cite[Corollary 5.9]{Ha66A}.
Take an embedding $f:S^{q+1}\to S^{m+1}$.
Take a normal $(m-q)$-framing of the image $f(D^{q+1}_-)$ of the lower hemisphere and a normal $(p+1)$-framing of the
image $f(D^{q+1}_+)$ of the upper hemisphere.
These framings are unique up to homotopy.
Thus the hemispheres of $f(S^{q+1})$ are equipped with a $(p+1)$-framing and an $(m-q)$-framing.
Using the $(m-q)$-framing identify each fiber of the normal bundle to $f(D^{q+1}_-)$ with the space $\R^{m-q}$.
Define a map $S^q\to V_{m-q,p+1}$ by mapping point $x\in S^q$ to the $(p+1)$-framing at the point $f(x)$.
Let $\xi[f]$ be the homotopy class of this map.

\begin{proof}[Proof of Corollary \ref{t:cornum}.d]
If $m=6k+p$ and $q=4k-1$ for some $k$, then by (L) $K^m_{q,p+q}=0$, hence the corollary is trivial.
So assume that there are no $k$ such that $m=6k+p$ and $q=4k-1$.

Then by Theorem \ref{t:main} $X$ is finite.

Denote by $E$ and $E_\#$ the quotients of $E^m(D^{p+1}\times S^q)$ and of $E^m_\#(S^p\times S^q)$ by the maximal summands $\Delta,\Sigma$ of Corollary \ref{t:cornum}.e.
Then $E/X$ is well-defined and is finite.
Hence $E$ is finite.

By Theorem \ref{t:main} we have the following commutative diagram, in which the horizontal sequence is exact:
$$\xymatrix{
& X \ar[r]^{\subset}  & E \ar[dr]^{q_X} \ar[d]_{q_\#r} &         \\
0 \ar[r] \ar[ur] & X\oplus K^m_{q,p+q} \ar[r]_\varphi & E_\# \ar[r]_{\overline\nu} & E/X \ar[r] & 0
 }.$$
Here we denote by $q_\#r,q_X,\varphi,\overline\nu$ the maps corresponding to $q_\#r,q_X,\overline\mu\oplus \sigma_\#,\overline\nu$.

Since $E$ is finite, we have $q_\#rE\subset TE_\#$, so $\overline\nu|_{TE_\#}$ is surjective.

Denote by $F$ the maximal free direct summand of $K^m_{q,p+q}$.
The corollary follows because in the next paragraph we prove that if $x\in F$ and $\varphi x\ne0$ is divisible by an integer $n$, then $x$ is divisible by $n$ in $F$.

Take $y\in E_\#$ such that $\varphi x=ny\ne0$.
Since  $\overline\nu|_{TE_\#}$ is surjective, there is $z\in TE_\#$ such that $\overline\nu z=\overline\nu y$.
By exactness $y-z=\varphi t$ for some $t\in X\oplus K^m_{q,p+q}$.
Then $t=t_F+t_T$ for some $t_F\in F$ and a finite order element $t_T$.
Hence $y-\varphi t_F=z+\varphi t_T\in TE_\#$.
So $TE_\#\ni n(y-\varphi t_F)=\varphi(x-nt_F)$.
Therefore $n_1\varphi(x-nt_F)=0$ for some integer $n_1>0$.
Since $\varphi$ is injective and $x,t_F\in F$, we have $x=nt_F$.
\end{proof}

\begin{proof}[Proof of Corollaries \ref{t:corlam}.b,c,d]
These corollaries follow from Theorem \ref{t:main} and (D,L), (D,L'), (D',L'), respectively.
Here (L') is applicable because
$\max\{2p+q+3,\frac{3q+3}2\}\ge\frac{2p+4q}3+2$ (indeed, the opposite inequalities imply $4p+3<q<4p+3$).
\end{proof}

\begin{proof}[Proof of Corollary \ref{t:corlam}.b']
Denote $m=6k-p$ and $q=4k-p-1$.
Since $p<k$, we have $m\ge 2p+q+3$ and $m\ge\frac{2p+4q}3+2$.
Hence by (L') $K^m_{q,p+q}\cong\Z$ is free.
Since $2m=p+3q+3\ge3q+4$, by (D) $E^m(D^{p+1}\times S^q)\cong\pi_q(V_{m-q,p+1})$.
So the corollary follows from Corollary \ref{t:cornum}.d.
\end{proof}


\begin{proof}[Deduction of Corollary \ref{t:corlam}.a from earlier known results]
Consider the following diagram
$$\xymatrix{
\pi_q(V_{m-q,p+1}) \ar[r]^\tau &  E^m(D^{p+1}\times S^q) \ar[r]^r &  E^m(S^p\times S^q) \ar[r]^{q_{\#}} \ar@(lu,ru)[ll]_{\widehat\alpha} &
E^m_\#(S^p\times S^q)
}$$
Here $\widehat\alpha$ is a map such that $\widehat\alpha r\tau=\id$ and $\widehat\alpha(f\#g)=\widehat\alpha(f)$
for each $f\in E^m(S^p\times S^q)$ and $g\in E^m(S^{p+q})$; such a map exists by \cite[Torus Lemma 6.1]{Sk02}
($\widehat\alpha:=\rho^{-1}\sigma^{-1}\pr_1\gamma\alpha$ in the notation of that lemma).
Hence $r\tau$ is injective and $q_{\#}r\tau$ is injective.

Take any $f\in E^m(S^p\times S^q)$.
Let $f':=r\tau\widehat\alpha(f)$.
Then $\widehat\alpha(f')=\widehat\alpha(f)$.
Then by \cite[Corollary 1.6.i]{Sk02} and since the smoothing obstruction assuming values in $E^m(S^{p+q})$
is changed by $[g]\in E^m(S^{p+q})$ if $f$ is changed to $f\#g$, we obtain $q_{\#}f=q_{\#}f'=q_{\#}r\tau\widehat\alpha(f)$.
Since $q_{\#}$ is surjective, we see that $q_{\#}r\tau$ is surjective.
\end{proof}



\begin{Remark}\label{r:kno}
(a) {\it Description of known cases of the Smoothing Lemma \ref{t:smo}, Theorem \ref{t:main} and Conjecture \ref{t:conj}.}
If $2m\ge 3p+3q+4$, then

$\bullet$ the Smoothing Lemma \ref{t:smo} holds by (S);

$\bullet$ Theorem \ref{t:main} and Conjecture \ref{t:conj} are true by (S), (D), (L) and the case $2m\ge 3p+3q+4$ of Corollary \ref{t:corlam}.a \cite[Theorem 3.9]{Sk06}.


For $p\ge q$ and $m\ge 2p+q+3$ we have $2m\ge 3p+3q+4$.
Hence the Smoothing Lemma \ref{t:smo}, Theorem \ref{t:main} and Conjecture \ref{t:conj} are true.
In fact, $E^m(S^p\times S^q)=0$ by the Haefliger Unknotting Theorem \cite[Theorem 2.6.b]{Sk06}.

If $2m\ge3q+2p+4$, then

$\bullet$ the Smoothing Lemma \ref{t:smo} is \cite[Theorem 1.2.DIFF]{Sk06''}
(an alternative proof follows from \cite[Proposition 5.6]{CRS} analogously to
\cite[\S4, proof of Higher-dimensional Classification Theorem (a)]{Sk06'});

$\bullet$ Theorem \ref{t:main} and Conjecture \ref{t:conj} follow from
Corollary \ref{t:corlam}.a and (D) (which are easy corollaries of known results).

If $p=0$, then

$\bullet$ the Smoothing Lemma \ref{t:smo} holds by \cite[Theorem 2.4]{Ha66C};

$\bullet$ Theorem \ref{t:main} and Conjecture \ref{t:conj} are true.
(Indeed, by \cite[Theorem 2.4]{Ha66C} and Theorem \ref{t:spli}
$E^m_\#(S^0\times S^q)\cong \pi_q(S^{m-q-1})\oplus E^m(S^q)\oplus K^m_{q,q}$.
Since $\lambda^m_{q,q}r\tau^m_{0,q}=\id\pi_q(S^{m-q-1})$, the map $\tau^m_{0,q}$ is injective.
So by Theorem \ref{l:xitaurho} $E^m(D^1\times S^q)\cong\pi_q(S^{m-q-1})\oplus E^m(S^q)$.)

There were known weaker versions of the Smoothing Lemma \ref{t:smo} \cite[Smoothing Theorem 8.1]{Sk06''}, \cite[Proposition 5.6]{CRS}, and
rational versions of Theorem \ref{t:main} \cite{CRS, Sk11}.

In the proofs of the Smoothing Lemma \ref{t:smo} and Theorem \ref{t:main} it is not required that $1\le p<q$ and $2m\le 3q+2p+3$.
So I give new proofs of known cases $2m\ge 3q+2p+4$ (in the stronger form of Conjecture \ref{t:conj} and Corollary \ref{t:corlam}.a).
These new proofs are only interesting for $1\le p<q$.

(b) {\it Description of known results for $m\le2p+q+2$ and $p,q\ge 1$.}
If $p>q$, then $2p+q+2>2q+p+2$, so after exchange of $p,q$ the inequality $m\le2p+q+2$ remains fulfilled.
So it suffices to present description for $p\le q$.
Then $E^m(S^p\times S^q)$ is known only for $(m,p,q)=(6k,2k-1,2k)$ \cite[Theorem 2.14]{Sk06} or $m=p+q+1$, $p\ge2$ \cite{LNS};
$E^m_\#(S^p\times S^q)$ is known only for $m\ge \frac{3q}2+p+2$ \cite[Theorem 3.9]{Sk06}.
For some $m,p,q$ there are no group structures compatible with natural constructions and invariants, see Remark \ref{r:dire}.
For $(m,p,q)=(7,1,3)$ see \cite{CS16, CS16o}.
\end{Remark}

\begin{Remark}[on Conjecture \ref{t:conj}]\label{r:conj}
(a) Conjecture \ref{t:conj} is known to be true for $p=0$ or $p\ge q$ or $2m\ge 3q+2p+4$, see Remark \ref{r:kno}.a.
Conjecture \ref{t:conj} is true for $(m,p,q)\in\{(11,1,6),(17,1,10)\}$
by Corollary \ref{t:corlam}.b' because $\pi_6(V_{5,2})\cong\Z_2$ and $\pi_{10}(V_{7,2})=0$ \cite{Pa56}.

(b) A candidate for an isomorphism from right to left in Conjecture \ref{t:conj} is
$r\oplus\sigma|_{K^m_{q,p+q}}\oplus\cs$, where $r$ is the restriction-induced map, $\sigma$ is defined in \S\ref{0modulo} and $\cs$ is defined in \S\ref{0stand}.
For $p=0$, $q=4k-1$ and $m=6k$ this is an isomorphism by \cite[Corollary 2.6]{Sk24}.
Proof of Theorem \ref{t:main} does not show that $[r\oplus\sigma|_{K^m_{q,p+q}}\oplus\cs]\otimes\Q$ is an isomorphism, in spite of Corollary \ref{t:cornum}.a.

(c) Conjecture \ref{t:conj} holds by the 5-lemma under the assumption $q_\#r|_X=\pm\overline\mu$.
The assumption does not hold for $p=0$, $q=4k-1$ and $m=6k$ by \cite[Corollary 2.5]{Sk24}.
Still, Conjecture \ref{t:conj} holds for this case by \cite[Corollary 2.6]{Sk24}.
It would be interesting to know if the assumption holds under stronger restrictions, e.g. for $m\ge 3q+p+4$.
\end{Remark}


\section{Proofs}\label{0plan}

\subsection{Standardization and group structure}\label{0stand}

{\bf Definition of the inclusion $\R^q\subset\R^m$ and of $\R^m_\pm,D^m_\pm,0_k,1_k,l,T^{p,q},T^{p,q}_\pm$.}
For each $q\le m$ identify the space $\R^q$ with the subspace of $\R^m$ given by the equations $x_{q+1}=x_{q+2}=\dots=x_m=0$ \cite{Ha66A}
(note that the notation in \cite{Ha66C, Sk11} is slightly different).
Analogously identify $D^q,S^q$ with the subspaces of $D^m,S^m$.

Define $\R^m_+,\R^m_-\subset\R^m$ and $D^m_+,D^m_-\subset S^m$ by equations $x_1\ge0$ and $x_1\le0$, respectively.
Then $S^m=D^m_+\cup D^m_-$.
Note that $0\times S^{m-1}=\partial D^m_+=\partial D^m_-=D^m_+\cap D^m_-\ne S^{m-1}$.
Denote by $0_k$ the vector of $k$ zero coordinates,
$$1_k:=(1,0_k)\in S^k,\quad l:=m-p-q-1,\quad T^{p,q}:=S^p\times S^q\quad\text{and}\quad T^{p,q}_\pm:=D^p_{\pm}\times S^q.$$


Assume that $m>p+q$.
Informally, {\it the standard embedding} is the smoothing of the composition
$$D^{p+1}\times D^{q+1}\cong D^{q+1}\times D^{p+1}\cong D^{q+1}\times0_l\times \frac12 D^{p+1}\overset{\subset}\to D^{q+1}\times D^l\times  D^{p+1}\cong D^{m+1}.$$
Formally, define {\bf the standard embedding}
\footnote{The image of $T^{p,q}$ under this embedding is the boundary of a certain neighborhood of
$S^q\subset S^m$ in $f(S^{p+q+1})$, where embedding $f:S^{p+q+1}\to S^m$ is defined by
$(y,z)\mapsto(y,0_l,z)$, $y\in\R^{q+1}$.}
$$\inc=\inc\phantom{}_{m,p,q}:D^{p+1}\times D^{q+1}\to D^{m+1}\quad\text{by}\quad \inc(x,y):=(y\sqrt{2-|x|^2},0_l,x)/\sqrt2.$$
Note that $\inc(D^{p+1}\times S^q)\subset S^m$, \ $\inc(D^{p+1}\times D^q_\pm)\subset D^m_\pm$ and $\inc_{m,p,q}$ is the restriction\footnote{For a map $f:X\rightarrow Y$ and $A\subset X$, $f(A)\subset B\subset Y$, the {\it restriction}  $g:A\rightarrow B$ of $f$ is defined by  $g(x):=f(x)$.}
of $\inc_{m+1,p+1,q}$ but not of $\inc_{m+1,p,q+1}$.
Denote by the same notation `$\inc$' restrictions
of $\inc$ (it would be clear from the context, to which sets).
In particular, $\inc_{m,m-n-1,n}:D^{m-n}\times S^n\to S^m$ is defined by the natural normal framing on the inclusion $S^n\to S^m$.

Take a subset $X\subset S^p$.
A map $f:X\times S^q\to S^m$ is called {\bf standardized} if
$$f(X\times\Int D^q_+)\subset\Int D^m_+\quad\text{and}\quad f|_{X\times D^q_-}=\inc\phantom{}_{m,p,q}.$$
Cf. \cite[Remark after definition of the standard embedding in \S2]{Sk05i}.

A homotopy $F:X\times S^q\times I\to S^m\times I$
is called {\bf standardized} if
$$F(X\times\Int D^q_+\times I)\subset\Int D^m_+\times I\quad\text{and}\quad
F|_{X\times D^q_-\times I}=\inc\times\id I.$$

\begin{Lemma}[Standardization Lemma; proved in \S\ref{0prosta}]\label{l:stand}
Let $X$ denote either $D^p_+$ or $S^p$.
For $X=S^p$ assume that $m\ge2p+q+3$.

(a) Each embedding $X\times S^q\to S^m$ is isotopic to a standardized embedding.

(b) If standardized embeddings $X\times S^q\to S^m$ are isotopic, then there is a standardized isotopy between them.
\end{Lemma}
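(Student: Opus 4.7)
The plan is to modify $f$ by a sequence of three ambient isotopies of $S^m$ so as to arrange the two defining conditions of a standardized embedding: equality with $\inc$ on $X\times D^q_-$ and image of $X\times\Int D^q_+$ inside $\Int D^m_+$. Part (b) will follow by running the same argument parametrically in $t\in I$, using parametric transversality and isotopy extension, with the endpoint embeddings fixed at their already-standardized values.

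Step A (separate the hemispheres). First, by transversality, isotope $f$ so that it is transverse to the equator $\partial D^m_-$; then $W:=f^{-1}(\partial D^m_-)$ is a closed codimension-$1$ submanifold of $X\times S^q$. Using a collar of $\partial D^m_-$ in $S^m$ together with isotopy extension, further isotope $f$ so that $W=X\times\partial D^q_-$ and $f(X\times D^q_\pm)\subset D^m_\pm$. Extraneous components of $W$ are removed by pushing them into one of the open hemispheres, which is possible in the codimension range because such components bound in $X\times S^q$ and their images bound in $D^m_\pm$.

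Step B (standardize the lower half). Restricting to the lower half yields an embedding $f_-:=f|_{X\times D^q_-}\colon X\times D^q_-\to D^m_-$ agreeing with $\inc$ on the boundary $X\times\partial D^q_-$. It suffices to isotope $f_-$ rel boundary through embeddings into $D^m_-$ to $\inc|_{X\times D^q_-}$, since isotopy extension then delivers the required ambient isotopy of $S^m$. For $X=D^p_+$, the source $D^p_+\times D^q_-$ is a disk and the disk theorem applies directly. For $X=S^p$, the source deformation retracts onto $S^p\times\{\ast\}$, so $f_-$ is captured up to isotopy rel boundary by the embedded $S^p$ together with a normal $q$-framing. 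Zeeman--Haefliger unknotting, valid in the metastable range $m\ge 2p+q+3$, makes the core $S^p$ standard; the extension of $f_-$ to the full embedding $f$ enhances the $q$-framing to a $(q+1)$-framing of the normal bundle of $S^p$, and comparison of $(q+1)$-framings of the trivial $(m-p)$-bundle over $S^p$ is controlled by $\pi_p(V_{m-p,q+1})$, which vanishes for $m\ge 2p+q+2$ by the $(m-p-q-2)$-connectivity of the Stiefel manifold. Hence $f_-$ is isotopic rel boundary to $\inc|_{X\times D^q_-}$.

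Step C (push the upper half into $\Int D^m_+$), and part (b). After Step B one has $f|_{X\times D^q_-}=\inc|_{X\times D^q_-}$ and $f(X\times D^q_+)\subset D^m_+$. A small ambient isotopy supported in a collar of $\partial D^m_-$ inside $D^m_+$ moves $f(X\times\Int D^q_+)$ off the equator into $\Int D^m_+$; this is possible because the normal direction of $f$ at $\inc(X\times\partial D^q_+)$ already points into the open upper hemisphere by the explicit formula for $\inc$ and injectivity of $f$. For part (b), the argument is rerun with an extra parameter $t\in I$ using the parametric versions of the same tools; in Step B for $X=S^p$ the relevant $(q+1)$-framing obstruction now lives in $\pi_{p+1}(V_{m-p,q+1})$, which vanishes precisely when $m\ge 2p+q+3$, explaining the sharpness of the dimension hypothesis. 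The main obstacle throughout is Step B for $X=S^p$, where the extension constraint must be exploited to lift the normal $q$-framing to a $(q+1)$-framing so as to land in a Stiefel-manifold homotopy group that is killed by connectivity precisely in the metastable range.
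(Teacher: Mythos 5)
Your Step~A contains the central gap. Transversality to the equator only gives you \emph{some} codimension-one preimage $W=f^{-1}(\partial D^m_-)$, and you then assert that $W$ can be made equal to $X\times\partial D^q_-$ by ``pushing extraneous components into one of the open hemispheres''. This is not justified. First, $W$ may be empty: any embedding isotopic to $\inc$ is isotopic to one whose image lies in a small ball inside $\Int D^m_+$, and then $W=\emptyset$. Second, even granting that some component of $W$ is in the isotopy class of $X\times\partial D^q_-$ in $X\times S^q$, removing the other components and steering the chosen one onto $X\times\partial D^q_-$ is a genuine standardization problem, not a general-position triviality. Third, and most seriously, even if you achieve $W=X\times\partial D^q_-$ as a subset, Step~B silently assumes that $f$ restricted to this subset is the embedding $\inc|_{X\times\partial D^q_-}$; nothing in Step~A produces this. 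Establishing $f|_{X\times\partial D^q_-}=\inc|_{X\times\partial D^q_-}$ is essentially the problem you are trying to solve one dimension lower (standardizing an embedding $X\times S^{q-1}\to S^{m-1}$), so the argument is circular rather than a reduction.

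The paper's proof (\S\ref{0prosta}) avoids the transversality route entirely. It first makes $g=\inc$ on all of $X\times D^q_-$ in one step, using the absolute (not rel boundary) fact that embeddings $D^p_+\times D^q\to S^m$, respectively $S^p\times D^q\to S^m$, are unique up to isotopy in the stated range; this immediately pins down both the preimage of the lower hemisphere and the restriction there. It then pushes $g(X\times\Int D^q_+)$ off $D^m_-$ via the uniqueness of tubular neighborhoods (for $X=D^p_+$), or via a more delicate construction using the disk $\Delta_1$, framings, an embedding $H:D^m_-\to D^m_-$ and a shrinking isotopy $h_t$ (for $X=S^p$); the ambient isotopy in this second step can move $g(X\times\Int D^q_+)$ arbitrarily, so the pushing argument is essential rather than the ``small collar isotopy'' you suggest. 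Your Steps~B and~C are plausible in outline (and the obstruction groups $\pi_p(V_{m-p,q+1})$, $\pi_{p+1}(V_{m-p,q+1})$ you compute match the paper's dimension hypotheses), but they sit downstream of an Step~A that does not do the work it claims, so the proof as written has a real gap.
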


{\bf Definition of the reflections $R,R_j$.}
Let $R:\R^m\to\R^m$ be the reflection of $\R^m$ with respect to the hyperplane given by equations $x_1=x_2=0$, i.e., $R(x_1,x_2,x_3,\dots,x_m):=(-x_1,-x_2,x_3,\dots,x_m)$.
Let $R_j$ be the reflection of $\R^m$ with respect to the hyperplane $x_j=0$, i.e.,
$R_j(x_1,x_2,\dots,x_{j-1},x_j,x_{j+1},\dots,x_m):=(x_1,x_2,\dots,x_{j-1},-x_j,x_{j+1},\dots,x_m)$.

\begin{Lemma}[Group Structure Lemma; proved in \S\ref{0progro}]\label{t:grst}
Let $X$ denote either $D^p_+$ or $S^p$.
For $X=D^p_+$ assume that $m\ge q+3$, for $X=S^p$ assume that $m\ge2p+q+3$.
Then a commutative group structure on $E^m(X\times S^q)$ is well-defined  by the following construction.

Let $0:=[\inc]$.
Let $-[f]:=[\overline f]$, where $\overline f(x,y):=R_2f(x,R_2y)$.
For standardized embeddings $f,g:X\times S^q\to S^m$ let $[f]+[g]$ be the isotopy class of the embedding $s_{fg}$ defined by
$$s_{fg}(x,y):=
\begin{cases} f(x,y)& y\in D^q_+\\ R(g(x,Ry))& y\in D^q_-\end{cases}.$$
(The two formulas agree on $X\times (D^q_+\cap D^q_-)$ because $\inc(x,y)=R\inc(x,Ry)$.
Clearly, $s_{fg}$ is an embedding.)
\end{Lemma}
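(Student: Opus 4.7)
The plan is to follow the classical recipe for parametric connected sum (cf.\ \cite{Ha66'}, \cite{Sk07, Sk10}), using Standardization Lemma \ref{l:stand} as the main tool: part (a) provides standardized representatives of isotopy classes, and part (b) provides standardized isotopies between isotopic standardized embeddings. These two facts let us glue hemispherewise both statically and parametrically, which is what the definition of $h_{fg}$ does.

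First I would verify that $h_{fg}$ is a smooth embedding. The two pieces of the formula agree on $X \times \partial D^q_+$ because both $f$ and $g$ restrict there to $\inc$ by standardization and $\inc(x,y) = R\inc(x, Ry)$. Their images are disjoint since $f(X \times \Int D^q_+) \subset \Int D^m_+$ while $R(g(X \times \Int D^q_+)) \subset R(\Int D^m_+) = \Int D^m_-$. Smoothness across the seam follows from the fact, implicit in ``standardized'', that near the equator $f$ and $g$ both coincide with $\inc$ and in particular meet $\partial D^m_\pm$ orthogonally.

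Second, well-definedness of $+$ on $E^m(X \times S^q)$: given $[f]=[f']$ and $[g]=[g']$, I would apply Lemma \ref{l:stand}.a to replace the four representatives by standardized ones, then apply Lemma \ref{l:stand}.b to obtain standardized isotopies $F_t$ from $f$ to $f'$ and $G_t$ from $g$ to $g'$. Substituting $F_t, G_t$ pointwise in $t$ into the defining formula yields a family $h_{F_t G_t}$ which, by the same hemisphere-disjointness argument, remains an embedding for every $t$, hence an isotopy from $h_{fg}$ to $h_{f'g'}$.

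Third, I would check the group axioms. Identity: $h_{\inc, f}$ coincides with $\inc$ on $X \times D^q_+$ and with the ``reflected copy'' $R \circ f \circ (\id \times R)$ on $X \times D^q_-$, and a 1-parameter family of rotations of $S^q$ sliding $D^q_-$ back onto $D^q_+$, accompanied by the compensating rotation of $S^m$, produces an isotopy to $f$. Inverses: by construction $h_{f \overline f}(x,y) = R_2\, h_{f \overline f}(x, R_2 y)$, so $h_{f \overline f}$ is $R_2$-symmetric, and the standard mirror-image cancellation, which simultaneously contracts the support of $f$ on $D^q_+$ and its $R_2$-mirror on $D^q_-$ to the common $R_2$-fixed equator, gives an isotopy to $\inc$. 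Associativity and commutativity follow by the usual rearrangement argument: subdivide $S^q$ into the required sub-cells, permute the summands by an ambient isotopy of $S^m$, and reglue using the standardized boundary behaviour.

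The main obstacle, and the source of the dimension hypothesis $m \ge 2p+q+3$ in the $X = S^p$ case, is commutativity. Interchanging the two summands of $h_{fg}$ requires an ambient isotopy that moves one $p$-parameter family of embedded $q$-spheres past another, and the standard Haefliger--Zeeman general-position count for such a swap produces exactly the inequality $m \ge 2p+q+3$. In the $X = D^p_+$ case one summand has a free boundary along which it can be slid off, so the commutation argument needs only the classical Haefliger codimension condition $m \ge q+3$.
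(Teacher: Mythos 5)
Your verification that $h_{fg}$ is an embedding and your well-definedness argument via Lemma \ref{l:stand} (parts (a) and (b), gluing standardized isotopies hemispherewise) match the paper's proof. But two of the group axioms are handled incorrectly.

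First, your claim that commutativity is ``the main obstacle'' requiring a general-position swap of two $p$-parameter families and that this is ``the source of the dimension hypothesis $m\ge2p+q+3$'' is wrong. Commutativity costs nothing: in the paper it is proved by a one-parameter family of ambient rotations. Each embedding $f$ is isotopic to $R^{-t}\circ f\circ(\id S^p\times R^t)$ where $R^t$ rotates the first two coordinates through angle $\pi t$; at $t=1$ this turns $h_{fg}$ into $h_{gf}$, with no dimension count at all. The dimension restriction enters exclusively through the Standardization Lemma \ref{l:stand}.b (and the Triviality Lemma), not through commutativity.

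Second, your treatment of inverses --- ``the standard mirror-image cancellation, which simultaneously contracts the support of $f$ on $D^q_+$ and its $R_2$-mirror on $D^q_-$ to the common $R_2$-fixed equator'' --- is a sketch, not an argument, and it hides the genuine difficulty: a naive shrinking need not stay embedded, and it is exactly here that the dimension restriction does real work. The paper instead shows that $h_{f\overline f}$ extends to a (carefully smoothed, via the map $\gamma$) embedding $X\times D^{q+1}\to D^{m+1}$, and then invokes the Triviality Lemma \ref{l:tricri}, which says an embedding $X\times S^q\to S^m$ isotopic to one extending over $X\times D^{q+1}$ must be isotopic to $\inc$. The Triviality Lemma in turn relies on Hudson's concordance-implies-isotopy theorem, which is the actual reason the dimension hypothesis is needed for the inverse axiom. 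Your proposal does not supply a substitute for either the extension step or the Triviality Lemma, so as written the existence of inverses is not established.

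The associativity paragraph is also sketchier than the paper's (which uses an explicit four-quadrant decomposition $D^q_{++},D^q_{+-},D^q_{--},D^q_{-+}$ of $S^q$ and produces a common representative $h_{fgs}$ of both associations), but your outline is at least in the same spirit and could be fleshed out without new ideas. The real gaps are the misplaced dimension count in the commutativity step and the missing Triviality Lemma argument for inverses.
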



{\bf Define the `embedded connected sum' or `local knotting' map}
$$\cs:E^m(S^{p+q})\to E^m(T^{p,q})\quad\text{by}\quad \cs(g):=0\#g=[\inc]\#g.$$
Identify  $1\times S^q$ and $-1\times S^q$ with the {\it first} and the {\it second} component of $S^q\sqcup S^q$, respectively.
Clearly, for $m\ge 2p+q+3$ the map $\cs$ is a homomorphism.



\begin{Remark}[comparison to previous work]\label{r:prwo}
(a) The Standardization and Group Structure Lemmas \ref{l:stand} and \ref{t:grst} for $X=D^p_+$ generalize
the well-known construction of the connected sum of knots, i.e. of isotopy classes of embeddings $S^q\to S^m$.
I could not find either a proof that the connected sum is well-defined for $m=q+2=3$, or reference to such a proof,
in \cite{Ad04, BZ03, CDM, CF63, Ka87, Ma04, PS96, Re48, Ro76}, for either
PL, $C^r$ or $C^\infty$ category (this is the more surprising because the connected sum is {\it not well-defined}
for links $S^1\sqcup S^1\to S^3$, cf. (b)).
A non-trivial part of such a proof corresponds to the proof of the Standardization Lemma \ref{l:stand}.b for $X=D^0_+$.
In another formalization the non-trivial part corresponds to proving that `if long knots are isotopic through knots, then they are isotopic through long knots'.

Similarly, it was not so easy for me to reconstruct omitted proof of \cite[Lemma 1.3.b]{Ha66A} which is
Standardization Lemma \ref{l:stand}.b for $X=D^0_+$ and $m\ge q+3$.

Even if this proof is unpublished, it should be known in folklore.

This proof and its generalization from $X=D^0_+$ to $X=D^p_+$ is not hard, see \S\ref{0prosta}.

(b) The Standardization and the Group Structure Lemmas \ref{l:stand} and \ref{t:grst} for $X=S^p$ generalize the well-known construction of the connected sum of links with two {\it numbered oriented} components, i.e. of isotopy classes of embeddings $S^q\sqcup S^q\to S^m$
\cite[Theorem 2.4]{Ha66C}.
It would be nice to have a published example showing that such a connected sum is not well-defined for $m=q+2=3$,
cf. Remark \ref{r:direst}.d for $p=0$ and \cite[Remark before Problem 3.3]{PS96}.
As in (a), a proof that the connected sum is well-defined for $m\ge q+3$ also seems to be unpublished,
cf. \cite[2.5]{Ha66C}; a non-trivial part of such a proof corresponds to the proof of the Standardization Lemma \ref{l:stand}.b for $X=S^0$.

This proof and its generalization from $X=S^0$ to $X=S^p$ is not hard, although more complicated than for $X=D^0_+$, see \S\ref{0prosta}.

(c) 
Also well-known are connected sum group structures on the set $C^{q+2}(S^q)$ of concordance classes of embeddings $S^q\to S^{q+2}$ and on the set $LM_{q,q}^m$ of link homotopy classes of link maps $S^q\sqcup S^q\to S^m$ \cite[Proposition 2.3]{Ko88}.
(See definition of concordance in \S\ref{0prosta}.
In knot theory concordance is called `cobordism', but I use `concordance' to agree with the rest of topology.)
\end{Remark}

\begin{Remark}[the dimension restrictions in the Group Structure Lemma \ref{t:grst}]\label{r:dire}
(a) {\it The orbits of the action $\#$ consist of different number of elements}, so
{\it there are no group structures $(+,0)$ on $E^m(T^{p,q})$ such that $f\#g=f+(0\#g)$}

$\bullet$ for $p+1=q=2k$ and $m=2p+q+2=6k$ \cite[Classification Theorem and Higher-dimensional Classification Theorem]{Sk06'}.

$\bullet$ for $q=3$, $p=1$ and $m=2p+q+2=7$ \cite{CS16}.

$\bullet$ for $q=p=2$ and $m=2p+q+1=7$ \cite[\S1]{CS08}.

(b) {\it There are no group structures on $E^7(T^{1,3})$ such that $r\tau:\pi_3(V_{4,2})\to E^7(T^{1,3})$ is a homomorphism.}
{\it There are no group structures on $E^7_\#(T^{1,3})$ such that $q_{\#}r\tau:\pi_3(V_{4,2})\to E^7_\#(T^{1,3})$ is a homomorphism.}
This is so because $r\tau$-preimages of distinct elements consist of different number of elements, and because of the analogous assertion
for $q_{\#}r\tau$ \cite{CS16}.

(c) {\it There are no group structures on $E^m(T^{p,q})$ such that the Whitney invariant $W$ \cite[\S2]{Sk06}, \cite[\S5]{Sk16e} is a homomorphism}

$\bullet$ for $p+1=q=2k$ and $m=2p+q+2=6k$ because $W$-preimages of distinct elements consist of different number
of elements \cite[Classification Theorem and Higher-dimensional Classification Theorem]{Sk06'}.
In this case $W:E^m_\#(T^{p,q})\to\Z$ is a 1--1 correspondence.

$\bullet$ for $p=q=2k$ and $m=2p+q+1=6k+1$ because by \cite[Theorem 5.1]{Bo71} $\im W=\Z\times0\cup0\times\Z$  which is not a subgroup of the range $\Z^2$ of $W$.
So in this case there are no group structures on $E^m_\#(T^{p,q})$ such that $W:E^m_\#(T^{p,q})\to\Z^2$ is a homomorphism.

(Note that $\varkappa=2W$, where $\varkappa$ is defined in \cite[\S3.9]{Bo71}, \cite[\S2.3]{CS08}.)
\end{Remark}

\begin{Remark}[the dimension restrictions in the Standardization Lemma \ref{l:stand}]\label{r:direst}
(a) The analogue of the Standardization Lemma \ref{l:stand}.a for $X=S^p$, $m=3$ and $p=q=1$ holds and follows from the unknottedness of $S^2$ in $S^3$ and the Alexander Theorem stating that
{\it any embedding $T^{1,1}\to S^3$ extends to an embedding $S^1\times D^2\to S^3$ or $D^2\times S^1\to S^3$.}

(b) If $p\ge q$, $m\ge2p+q+3$ and $X$ is either $D^p_+$ or $S^p$, then $|E^m(X\times S^q)|=1$ (by (D) of
\S\ref{0metca} for $X=D^p_+$, and by the Haefliger-Zeeman Unknotting Theorem \cite[Theorem 2.8.b]{Sk06} for $X=S^p$).
So the Standardization, the Group Structure and the Triviality Lemmas \ref{l:stand}.a, \ref{t:grst} and \ref{l:tricri} hold obviously.

(c) If $X=S^p$, then the analogue of the Standardization Lemma \ref{l:stand}.a

$\bullet$ holds for $m=2p+q+2$ by the proof in \S\ref{0prosta}; so there is a useful multivalued operation on
$E^m(T^{p,q})$, cf. \cite[\S2.4, Definition of parametric connected sum $f+_s\tau(l,b)$]{CS16}.

$\bullet$ is false for $m=2p+q+1$.
(If it were true, then a multivalued operation `$+$' on $E^m(T^{p,q})$ would
be defined by the formula of the Group Structure Lemma \ref{t:grst}.
We would have $W(h)=W(f)+W(g)$ for each $h\in\{f+g\}$, cf. \cite[end of \S2]{Sk08}.
This contradicts to $\im W=\Z\times0\cup0\times\Z$ of Remark \ref{r:dire}.c for $m=2p+q+1$.)

(d) The analogue of the Standardization Lemma \ref{l:stand}.b for $X=S^p$ and $m=2p+q+2$

$\bullet$ holds for embeddings from $\im\cs$;
so there is a group structure on $\im\cs$, cf. \cite[Construction of $\Delta^2$ and $B^6$ in the proof of Lemma 19]{Av14}.

$\bullet$ is false for $p>0$.
(If it were true, then the Group Structure Lemma \ref{t:grst} would be true.
Indeed, for the deduction in \S\ref{0progro} of the latter from the Standardization Lemma \ref{l:stand} a weaker restriction $m\ge\max\{2p+q+2,p+q+3\}$ is sufficient, cf. Remark \ref{r:pl}.a.
So one obtains a contradiction to Remark \ref{r:dire}.ac for $m=2p+q+2$.)

$\bullet$ is conjecturally false for $p=0$.
\end{Remark}

\subsection{Proof of Theorem \ref{t:main} using Lemmas \ref{l:stand} and \ref{t:grst}} \label{0modulo}


Before reading this subsection a reader might want to grasp the idea by reading the proof of a simpler result in
\S\ref{0newpro}
(although the proof here is formally independent of
\S\ref{0newpro}).


\begin{Lemma}\label{l:strip}
For $m\ge p+q+3$ the following is exact sequence of groups:
$$\dots\to E^{m+1}(T^{p,q+1}_+)\overset{\lambda'}\to \pi_q(S^l) \overset{\mu'}\to E^m(T^{p+1,q}_+)
\overset{\nu'}\to E^m(T^{p,q}_+)\to\dots$$
Here $\nu'$ is the restriction-induced map; $\lambda'[f]$ is the obstruction to the existence of a vector field on $f(1_p\times S^{q+1})$ normal to $f(T^{p,q+1}_+)$,
and $\mu'$ is the composition of $\tau$ and the map $\mu'':\pi_q(S^l)=\pi_q(V_{l+1,1})\to\pi_q(V_{m-q,p+1})$ induced by `adding $p$ vectors' inclusion.
\end{Lemma}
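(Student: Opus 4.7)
The plan is to prove exactness at $\pi_q(S^l)$ and at $E^m(T^{p+1,q}_+)$ by direct geometric constructions, using Lemma \ref{l:stand} (for $X=D^p_+$) to keep all boundary standardizations fixed throughout the argument.

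First I would define $\lambda'$ geometrically: given a standardized embedding $f:D^p_+\times S^{q+1}\to S^{m+1}$, the restriction $f|_{D^p_+\times D^{q+1}_-}=\inc$ is the standard embedding, so along the equator $D^p_+\times S^q$ the upper-hemisphere restriction $f|_{D^p_+\times D^{q+1}_+}$ has a well-defined comparison with the standard extension. This comparison yields a unit normal vector field on $D^p_+\times S^q$ (transverse to the standard $p$-frame and to the image), and evaluating at a basepoint of $D^p_+$ gives a map $S^q\to S^l$ representing $\lambda'[f]\in\pi_q(S^l)$; Lemma \ref{l:stand}(b) makes this independent of the chosen standardization.

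Next I would verify exactness at $E^m(T^{p+1,q}_+)$. The relation $\nu'\mu'=0$ is immediate: the $(p{+}1)$-frame $\mu''[x]=(x,e_1,\dots,e_p)$ used in the definition $\mu'[x]=\tau(\mu''[x])$ has its last $p$ entries standard, so the $D^p_+$-slice of the resulting embedding is $\inc$. Conversely, given $[g]\in\ker\nu'$, apply the Standardization Lemma to isotope $g$ so that $g|_{D^p_+\times S^q}=\inc$; then the extension data of $g$ into the additional $D^{p+1}_+\setminus D^p_+$ directions is encoded, modulo the standard $p$-frame, by a single unit vector field on $S^q$, producing a class $x\in\pi_q(S^l)$ with $\mu'[x]=[g]$. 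For exactness at $\pi_q(S^l)$: $\mu'\lambda'=0$ holds because $\mu'\lambda'[f]$ extends by construction to $f$ itself as an embedding of $D^p_+\times S^{q+1}$, and any such extension is isotopic to $\inc$ via contraction along the hemisphere $D^{q+1}_-$. Conversely, an isotopy of $\mu'[x]$ to $\inc$ furnishes a concordance of the underlying framing, which in the codimension-$\ge 3$ range $m\ge p+q+3$ lifts to an embedding $D^p_+\times S^{q+1}\to S^{m+1}$ whose $\lambda'$-invariant is $x$. The continuation of the sequence at $E^m(T^{p,q}_+)$ is then obtained by iterating the $\lambda'$-construction with $q$ replaced by $q-1$.

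The main obstacle is this final converse step: upgrading an isotopy $\mu'[x]\sim\inc$ to an embedding of $D^p_+\times S^{q+1}\to S^{m+1}$ realizing $\lambda'=x$. This requires a concordance-implies-isotopy argument for manifolds with boundary, keeping the boundary standardization rigid throughout the construction. The Standardization Lemma supplies the infrastructure to enforce this rigidity, while the hypothesis $m\ge p+q+3$ is precisely the codimension-$\ge 3$ restriction that makes the concordance-to-isotopy conversion available for the pair $(D^p_+\times S^{q+1},S^{m+1})$.
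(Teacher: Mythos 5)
Your proposal takes a genuinely different route from the paper, and as written it has gaps in exactly the places where the paper's method is designed to do the heavy lifting. The paper proves Lemma~\ref{l:strip} not by direct geometric verification of each exactness claim, but by a braid-diagram argument (\S\ref{0prostr}): it weaves together the exact $\tau\rho\xi$-sequences of Theorem~\ref{l:xitaurho} for both $D^p$- and $D^{p+1}$-products with the fibration sequence $\pi_*(S^l)\to\pi_*(V_{m-q,p+1})\to\pi_*(V_{m-q,p})$, checks the commutativity identities $\xi\overline\rho=\mu''\lambda'$ and $\lambda''=\overline\tau\lambda'$, and then invokes the Snake (braid) Lemma as in \cite[proof of (6.5)]{Ha66}. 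The burden of proof is thus shifted almost entirely onto the already-established Theorem~\ref{l:xitaurho} plus elementary diagram chasing.

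The concrete gaps in your direct approach are precisely the converse inclusions, and they are not minor. For $\ker\nu'\subset\im\mu'$ you assert that once $g|_{D^p_+\times S^q}$ is standardized, the embedding of $D^{p+1}_+\times S^q$ is ``encoded by a single unit vector field on $S^q$,'' hence equals some $\mu'[x]$ up to isotopy. But reading off a normal vector field from the $1$-jet along $D^p_+\times S^q$ only gives an element of $\pi_q(S^l)$; you still must show that the isotopy class of $g$ is \emph{determined} by this field, i.e., that $g$ can be engulfed (rel the standardized piece) into a tubular neighborhood of $\inc(D^p_+\times S^q)$ and then identified with the framed construction $\tau$. That is essentially the exactness of the $\tau\rho\xi$-sequence at $E^m(D^{p+1}\times S^q)$, a nontrivial fact cited to \cite{CFS,Sk15} in Theorem~\ref{l:xitaurho}, not something that follows from the Standardization Lemma alone. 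The step you flag yourself as ``the main obstacle'' ($\ker\mu'\subset\im\lambda'$) has the same character: ``concordance-implies-isotopy with rigid boundary'' is again the kind of statement whose content is packaged in Theorem~\ref{l:xitaurho}, and you would need to re-derive it. Finally, exactness at $E^m(T^{p,q}_+)$ is dismissed with ``iterate with $q$ replaced by $q-1$,'' which does not address the actual claim $\ker\lambda'\subset\im\nu'$ (that the vanishing of the normal-field obstruction lets you extend a given embedding of $D^p_+\times S^q$ to one of $D^{p+1}_+\times S^q$). In short, the direct approach is plausible in outline, but what it buys you is no shortcut: each converse inclusion would reproduce the hard input of Theorem~\ref{l:xitaurho} that the braid diagram lets the paper use wholesale.
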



{\bf Accurate definition of $\lambda'$.}
We follow \cite[Definition of Ob in p.9]{Sk11}, cf. definition of $\overline\lambda$ in
\S\ref{0prolex}.
Take an embedding $f:T^{p,q+1}_+\to S^{m+1}$.
For sufficiently small $\varepsilon>0$ take

$\bullet$ a trivialization $t:D^p_+\times D^{q+1}_-\times D^{l+1}\to S^{m+1}$ of the normal bundle to
$f(D^p_+\times D^{q+1}_-)$ such that $|t(x,y,z)-f(x,y)|=\varepsilon$ for each
$(x,y,z)\in D^p_+\times D^{q+1}_-\times S^l$;

$\bullet$ a unit normal to $f(T^{p,q+1}_+)$ vector field $s:D^{q+1}_+\to S^{m+1}$ on $f(1_p\times D^{q+1}_+)$.

Let $\lambda'[f]$ be the homotopy class of the map
$$S^q\overset{\theta}\to\partial D^{q+1}_+\overset{t^{-1}s}\to D^p_+\times D^{q+1}_-\times S^l\overset{\pr_3}\to S^l,\quad\text{where}\quad \theta(x):=(0,x).$$
This is well-defined because both trivialization $t$ and vector field $s$ are unique up to homotopy.


\begin{proof}[Proof of Lemma \ref{l:strip}] Consider the following diagram.
$$\xymatrix{
E^{m+1}(T^{p,q+1}_+) \ar@{~>}[r]^{\overline\rho} \ar@{.>}[d]^{\lambda'} & E^{m+1}(S^{q+1}) \ar[d]^\xi \ar@{~>}[rd]^{\overline\xi}\\
\pi_q(S^l) \ar@{-->}[r]^{\mu''} \ar@{.>}[rd]^{\mu'} & \pi_q(V_{m-q,p+1}) \ar@{-->}[r]^{\nu''} \ar[d]^\tau &
\pi_q(V_{m-q,p}) \ar@{-->}[rd]^{\lambda''} \ar@{~>}[d]^{\overline\tau} \\
& E^m(T^{p+1,q}_+) \ar@{.>}[r]^{\nu'} \ar[rd]^\rho & E^m(T^{p,q}_+) \ar@{.>}[r]^{\lambda'} \ar@{~>}[d]^{\overline\rho} &
\pi_{q-1}(S^l) \ar@{-->}[d]^{\mu''}\ar@{.>}[rd]^{\mu'}\\
& &  E^m(S^q) \ar[r]^\xi \ar@{~>}[rd]^{\overline\xi} & \pi_{q-1}(V_{m-q,p+1}) \ar[r]^\tau \ar@{-->}[d]^{\nu''}
& E^{m-1}(T^{p+1,q-1}_+) \ar@{.>}[d]^{\nu'} \\
& & & \pi_{q-1}(V_{m-q,p}) \ar@{~>}[r]^{\overline\tau} & E^{m-1}(T^{p,q-1}_+)
}.$$
Here

$\bullet$ the $\mu''\nu''\lambda''$ sequence is the exact sequence of the `forgetting the last vector' bundle
$S^l\to V_{m-q,p+1}\to V_{m-q,p}$;

$\bullet$ the exact $\tau\rho\xi$- and $\overline\tau\overline\rho\overline\xi$-sequences are defined in Theorem \ref{l:xitaurho}.

Let us prove the commutativity.

Let us prove that $\xi\overline\rho=\mu''\lambda'$ for the left upper square.
By the Standardization Lemma \ref{l:stand}.a each element of $E^{m+1}(T^{p,q+1}_+)$ is representable by a standardized embedding
$f:D^p_+\times S^{q+1}\to S^{m+1}$.
Since $f|_{D^p_+\times D^{q+1}_-}=\inc$, there is a normal $(m-q)$-framing of $f(D^{q+1}_-)$ extending $f|_{D^p_+\times D^{q+1}_-}$ and a normal $(p+1)$-framing of $f(D^{q+1}_+)$ extending $f|_{D^p_+\times D^{q+1}_+}$.
Then $\xi[f|_{0_p\times S^q}]=\mu''\lambda'[f]$ by definitions of $\lambda'$ and $\xi$.

The relation $\lambda''=\overline\tau\lambda'$ follows by definitions of $\lambda'$ and of $\lambda''$ (recalled after the proof).

The commutativity of other squares and triangles is obvious.

Clearly, $\lambda'\nu'=0$.
So the exactness of the $\lambda'\mu'\nu'$ sequence follows by the Snake Lemma, cf. \cite[proof of (6.5)]{Ha66A}.
\end{proof}

{\bf Definition of $\lambda''$.}
A map $\R^k\times X\to \R^n$ is called a {\it linear monomorphism} if its restriction to $\R^k\times x$ is a linear monomorphism for each $x\in X$.
Represent an element $x\in\pi_q(V_{m-q,p})$ by a linear monomorphism
$$f:\R^p\times S^q\to\R^{m-q}=\R^p\times\R^{l+1}\quad\text{such that}\quad f(x,y)=(x,0)\quad\text{for each}\quad y\in D^q_-.$$
By the Covering Homotopy Property for the `forgetting last vector' bundle $V_{m-q,p+1}\to V_{m-q,p}$ the restriction $f|_{\R^p\times D^q_+}$ extends to a linear monomorphism $s:\R^p\times\R\times D^q_+\to\R^{m-q}=\R^p\times\R^{l+1}$.
For each $y\in S^{q-1}$ we have $s(\R^p\times0\times y)=\R^p\times0$, so $s(0_p,1,y)\not\in\R^p\times0$.
Hence we can define a map
$$g:S^{q-1}\to S^l\quad\text{by}\quad g(y):=
n\pr\phantom{}_2s(0_p,1,y),\quad\text{where}\quad n(z):=z/|z|.$$
Let $\lambda''(x)$ be the homotopy class of $g$.

\smallskip
{\bf Definition of the Zeeman homomorphism}
$$\zeta=\zeta_{m,n,q}:\pi_q(S^{m-n-1})\to E^m_U(S^q\sqcup S^n)\quad\text{for}\quad q\le n.$$
For a map $x:S^q\to S^{m-n-1}$ define an embedding
$$\overline\zeta\phantom{}_x:1\times S^q\cup(-1)\times S^n\to S^m\quad\text{by}\quad
\overline\zeta\phantom{}_x(t,a) :=
\begin{cases} \inc_{m,m-n-1,n}(0,a) & t=-1\\ \inc_{m,m-n-1,n}(x(a),a) & t=1 \end{cases}.$$
Recall that $S^q\sqcup S^n = 1\times S^q\sqcup(-1)\times S^n$.
Define $\zeta[x]:=[\overline\zeta_x]$.


Clearly, $\zeta$ is well-defined, is a homomorphism, and $\lambda\zeta=\id\pi_q(S^{m-n-1})$,
see \cite[Theorem 10.1]{Ha66C}, \cite[Definition of Ze before Lemma 3.4]{Sk11}, \cite[Remarks 3.2.ac]{Sk16h}.

Note that $\zeta_{m,q,q}=r\tau^m_{0,q}=r\mu'$.

\smallskip
{\bf Definition of the homomorphism}
$$\sigma=\sigma_{m,p,q}:E^m_0(S^q\sqcup S^{p+q})\to E^m(T^{p,q})\quad\text{for}\quad m\ge p+q+3
\quad\text{and}\quad q>0.$$
Let $E^m_0(S^q\sqcup S^n)\subset E^m(S^q\sqcup S^n)$ be the subset formed by the isotopy classes of embeddings
whose restriction to the {\it first} component is standard.
Represent an element of $E^m_0(S^q\sqcup S^{p+q})$ by an embedding
$$f:S^q\sqcup S^{p+q}\to S^m\quad\text{such that}\quad f|_{S^q}=\inc|_{0_{p+1}\times S^q}
\quad\text{and}\quad f(S^{p+q})\cap \inc(D^{p+1}\times S^q)=\emptyset.$$
Join $f(S^{p+q})$ to $\inc(-1_p\times S^q)$ by an arc whose interior misses $f(S^{p+q})\cup\inc(D^{p+1}\times S^q)$.
Let $\sigma[f]$ be the isotopy class of the embedded connected sum of $\inc|_{S^p\times S^q}$ and
$f|_{S^{p+q}}$ along the arc.
(The images of these embeddings are not necessarily contained in disjoint balls.)
For $p=0$ the orientation on $\inc(-1_p\times S^q)$ is `parallel' to the orientation on $\inc(1_p\times S^q)$.
Cf. \cite[\S3, Definition of $\sigma^*$]{Sk11}.

\smallskip
The map $\sigma$ is well-defined for $m\ge p+q+3$ and is a homomorphism for $m\ge 2p+q+3$
\cite[Lemmas 3.1--3.3]{Sk11}.
(The main reason for being well-defined is that $E^m_0(S^q\sqcup S^n)$ is in 1--1 correspondence with the set
of isotopy classes of embeddings whose restriction to the first component is {\it standard}
\cite[Lemma 3.1]{Sk11}, cf. \cite[Proof of Theorem 7.1]{Ha66C}.)

We have $\sigma(f)+\cs g=\sigma(f\#g)$ \cite[Remark after Lemma 3.3]{Sk11}.

For $p=0$
some results on $\sigma$ are presented in \cite{Sk24}.


\begin{proof}[Proof of Theorem \ref{t:main}]
Clearly, the first sentence follows from the `moreover' part.
So let us prove the `moreover' part.
Consider the following diagram.
$$\xymatrix{
& \pi_q(S^l) \ar[r]^{\mu'} \ar[d]^{\zeta}
& E^m(T^{p+1,q}_+) \ar[dr]^{\nu'} \ar@{..>}[d]_{q_{\#}r} & & \pi_{q-1}(S^l) \ar[d]^{\zeta}\\
E^{m+1}(T^{p,q+1}_+)
\ar[ur]^{\lambda'} \ar[r]_{\zeta\lambda'} \ar[dr]_{i\zeta\lambda'}
& E^m_U(S^q\sqcup S^{p+q}) \ar[r]_{\sigma_\#} \ar@{-->}@(ru,rd)[u]_{\lambda} \ar[d]^i
& E^m_\#(T^{p,q}) \ar[r]_(.6){\overline\nu}
& E^m(T^{p,q}_+) \ar[r]_(.4){\zeta\lambda'} \ar[ru]^{\lambda'}  \ar[dr]_{i\zeta\lambda'}
& E^{m-1}_U(S^{q-1}\sqcup S^{p+q-1}) \ar@{-->}@(ru,rd)[u]_{\lambda} \ar[d]^i\\
& E^m_0(S^q\sqcup S^{p+q}) \ar[r]^\sigma & E^m(T^{p,q}) \ar[u]^{q_{\#}} \ar[ru]^\nu & & E^{m-1}_0(S^{q-1}\sqcup S^{p+q-1}) \\
&  E^m(S^{p+q}) \ar[u]_{\phantom{}_j\#} \ar[ur]_{\cs}
}$$
Here the $\lambda'\mu'\nu'$-sequence is defined in Lemma \ref{l:strip}, maps $\zeta$ and $\sigma$ are defined above,

$\bullet$ $i$ is the inclusion,

$\bullet$ $\nu$ is the restriction-induced map,

$\bullet$ $\sigma_\#:=q_\#\sigma$,

$\bullet$ the map $\overline\nu$ is well-defined by $\overline\nu q_{\#}(f):=\nu(f)$,

$\bullet$ $\phantom{}_j\#g:=j\# g$, where the `standard embedding' $j:S^q\sqcup S^{p+q}\to S^m$ is any embedding
whose components are contained in disjoint balls and are isotopic to the inclusions.

The commutativity of the triangles is clear, except for $\cs =\sigma\phantom{}_j\#$, which follows by $\sigma[j]=[\inc]$.

The map $\overline\mu$ of Theorem \ref{t:main} is well-defined by $\overline\mu(\mu'x):=\sigma_\#\zeta x$.
Let $X:=\ker\nu'=\im\mu'$.

Recall the Serre Theorem: {\it the group $\pi_q(S^l)$ is finite unless $q=4k-1$ and $l=2k$ for some $k$, and $\pi_{4k-1}(S^{2k})$ is the sum of $\Z$ and a finite group.}
This and (VF) of \S\ref{0metca} imply the assertion on the finiteness of $X=\im(\tau\mu'')$.
Then using the exact sequence of the `forgetting the last vector' bundle
$S^l\to V_{m-q,p+1}\to V_{m-q,p}$ we obtain the assertion on $X^{6k+p}_{p,4k-1}$.

It suffices to prove that the horizontal sequence of Theorem \ref{t:main} is exact.

The exactness of the $\nu\sigma(i\zeta\lambda')$-sequence is \cite[Theorem 1.6]{Sk11}.

The map $i\oplus\phantom{}_j\#$ is an isomorphism \cite[Theorem 2.4]{Ha66C}.
Hence by the Smoothing Lemma \ref{t:smo} and $\cs =\sigma\phantom{}_j\#$, the first two third-line groups both have $E^m(S^{p+q})$-summands mapped one to the other under $\sigma$.
Taking quotients by these summands one obtains the exactness of the second line.

Since $\lambda\zeta=\id$, we have that $\zeta$ is injective and
$E^m_U(S^q\sqcup S^{p+q})=\im\zeta\oplus K^m_{q,p+q}$ (the mutually inverse isomorphisms are given by $x\mapsto(\zeta\lambda x,x-\zeta\lambda x)$ and $(y,z)\mapsto y+z$).

Since the right $\zeta$ is injective, we have $\im\nu'=\ker\lambda'=\ker\zeta\lambda'=\im\overline\nu$.

The restriction $\sigma_\#|_{K^m_{q,p+q}}$ is injective because
$$\ker\sigma_\#\cap K^m_{q,p+q}=\im(\zeta\lambda')\cap K^m_{q,p+q}\subset\im\zeta\cap K^m_{q,p+q}=0.$$
If $\overline\mu\mu'x=\sigma_\#\zeta x=0$, then $x\in\im\lambda'=\ker\mu'$, so $\mu'x=0$.
Hence $\overline\mu$ is injective.

Also
$$\ker\overline\nu=\im\sigma_\#=\sigma_\#\im\zeta\oplus \sigma_\#K^m_{q,p+q}=\im \overline\mu\oplus \sigma_\#K^m_{q,p+q}.$$
Thus the horizontal sequence of Theorem \ref{t:main} is exact.
\end{proof}

{\bf Comment.}
I conjecture that the piecewise linear (PL) analogue of Theorem \ref{t:main} holds.
This analogue is obtained by replacing $E^m_\#(T^{p,q})$ and $E^m(T^{p+1,q}_+)$ by $E^m_{PL}(T^{p,q})$  and $E^m_{PL}(T^{p+1,q}_+)$; the group $K^m_{q,p+q}$ remains the same.

The PL analogue of the Standardization Lemma \ref{l:stand} for $X=S^p$ holds by \cite{Sk05i}.
The PL analogues of the Standardization Lemma \ref{l:stand} for $X=D^p_+$, of the Group Structure and the Triviality Lemmas \ref{t:grst} and \ref{l:tricri} hold with the same proof (there is even a simplification in the proof that $[f]+[\overline f]=0$).
It would be interesting to find the PL analogue of Theorem \ref{l:xitaurho}.

\subsection{Proof of the Standardization Lemma \ref{l:stand}}\label{0prosta}


{\it Proof of (a) for $X=D^p_+$.}
Take an embedding $g:T^{p,q}_+\to S^m$.
Since every two embeddings of a disk into $S^m$ are isotopic, we can make an isotopy of $S^m$ and assume that
$g=\inc$ on $D^p_+\times D^q_-$.

The ball $D^m_-$ is contained in a tubular neighborhood of $\inc(D^p_+\times D^q_-)$ in $S^m$ relative to
\linebreak
$\inc(D^p_+\times\partial D^q_-)$.
The image $g(D^p_+\times\Int D^q_+)$ is disjoint from some tighter such tubular neighborhood.
Hence by the Uniqueness of Tubular Neighborhood Theorem we can make an isotopy of $S^m$ and assume that $g(D^p_+\times\Int D^q_+)\cap D^m_-=\emptyset$.
Then $g$ is standardized.
\qed

\smallskip
{\it Proof of (b) for $X=D^p_+$.}
Take an isotopy $g$ between standardized embeddings $T^{p,q}_+\to S^m$.
By the 1-parametric version of `every two embeddings of a disk into $S^m$ are isotopic' we can make a self-isotopy of
$\id S^m$, i.e. a level-preserving autodiffeomorphism of $S^m\times I$ identical on $S^m\times\{0,1\}$, and assume that $g=\inc\times\id I$ on $D^p_+\times D^q_-\times I$.

The ball $D^m_-\times I$ is contained in a tubular neighborhood $V$ of $\inc(D^p_+\times D^q_-)\times I$ in
$S^m\times I$ relative to $\inc(D^p_+\times\partial D^q_-)\times I$.
We may assume that $V\cap S^m\times k$ is `almost $D^m_-$' for each $k=0,1$.

The image $g(D^p_+\times\Int D^q_+\times I)$ is disjoint from some tighter such tubular neighborhood, whose intersection with $S^m\times k$ is $V\cap S^m\times k$ for each $k=0,1$.
Hence by the Uniqueness of Tubular Neighborhood Theorem we can make an isotopy of $S^m\times I$ relative to
$S^m\times\{0,1\}$ and assume that $g(D^p_+\times\Int D^q_+\times I)\cap D^m_-\times I=\emptyset$.
Then $g$ is standardized.
\qed

\begin{figure}[h]
\definecolor{uuuuuu}{rgb}{0.27,0.27,0.27}
\begin{tikzpicture}[line cap=round,line join=round,>=triangle 45,x=1.0cm,y=1.0cm]
\clip(-5.18,-0.5) rectangle (5.41,3.26);

\draw [rotate around={-0.5:(0.43,0.76)}] (0.43,0.76) ellipse (2.49cm and 0.99cm);
\fill [white] (2.5,1.1) rectangle (2.7,1.4);
\fill [white] (-1.7,1.1) rectangle (-2,1.4);

\draw [rotate around={-0.83:(0.36,1.51)}] (0.36,1.51) ellipse (2.33cm and 1.09cm);

\draw (0.3,0.42)-- (0.3,-0.23);
\draw (0.3,0.26)-- (0.53,0.26);
\draw (0.3,0.09)-- (0.53,0.09);
\draw (0.3,-0.06)-- (0.53,-0.06);
\draw (0.47,0.3)-- (0.53,0.26);
\draw (0.53,0.26)-- (0.47,0.22);
\draw (0.53,0.09)-- (0.47,0.13);
\draw (0.53,0.09)-- (0.47,0.05);
\draw (0.53,-0.06)-- (0.47,-0.02);
\draw (0.53,-0.06)-- (0.47,-0.11);
\begin{scriptsize}
\fill [color=uuuuuu] (0.3,-0.23) circle (1.0pt);
\fill [color=uuuuuu] (0.3,0.42) circle (1.0pt);
\draw (-0.1,0.1) node {\large $\Delta_1$};
\draw (2.9,2.6) node {\large $g(T^{p,q})$};
\end{scriptsize}
\end{tikzpicture}
\caption{To the proof of the Standardization Lemma \ref{l:stand}.a for $X=S^p$}
\label{f:stand}
\end{figure}

Extend $\inc$ to $\sqrt2 D^{p+1}\times D^{q+1}$ by the same formula as in the definition of $\inc$.
For $\gamma\le\sqrt2$ denote $\Delta_\gamma:=\inc(\gamma D^{p+1}\times\{-1_q\})\subset \Int D^m_-$.

\smallskip
{\it Proof of (a) for $X=S^p$.} See fig. \ref{f:stand}.
Take an embedding $g:T^{p,q}\to S^m$.
Since $m>2p+q$, every two embeddings $S^p\times D^q\to S^m$ are isotopic (this is a trivial case of Theorem \ref{l:xitaurho}).
So we can make an isotopy and assume that $g=\inc$ on $S^p\times D^q_-$.

Since $m>2p+q+1$, by general position we may assume that $\im g\cap \Delta_1=\partial \Delta_1$.
Then there is $\gamma$ slightly greater than 1 such that $\im g\cap \Delta_\gamma=\partial \Delta_1$.
Take the `standard' $q$-framing on $\Delta_\gamma$ tangent to $\inc(\gamma D^{p+1}\times S^q)$
whose restriction to $\partial\Delta_1$ is the `standard' normal $q$-framing of $\partial\Delta_1$ in $\im g$.
Then the `standard' $(m-p-q-1)$-framing normal to $\inc(\gamma D^{p+1}\times S^q)$ is an
$(m-p-q-1)$-framing on $\partial\Delta_1$ normal to $\im g$.
Using these framings we construct

$\bullet$ an orientation-preserving embedding $H:D^m_-\to D^m_-$ onto a tight neighborhood of $\Delta_1$ in $D^m_-$,
and

$\bullet$ an isotopy $h_t$ of $\id T^{p,q}$ shrinking $S^p\times D^q_-$ to a tight neighborhood of
$S^p\times\{-1_q\}$ in $S^p\times D^q_-$ such that
$$H(\Delta_{\sqrt2})=\Delta_\gamma,\quad H\inc(S^p\times D^q_-)=H(D^m_-)\cap\im g
\quad\text{and}\quad H\inc=\inc h_1 \quad\text{on}\quad S^p\times D^q_-.$$
Embedding $H$ is isotopic to $\id D^m_-$ by \cite[Theorem 3.2]{Hi76}.
This isotopy extends to an isotopy $H_t$ of $\id S^m$ by the Isotopy Extension Theorem \cite[Theorem 1.3]{Hi76}.
Then $H_t^{-1}gh_t$ is an isotopy of $g$.
Let us prove that embedding $H_1^{-1}gh_1$ is standardized.

We have $H_1^{-1}gh_1=H_1^{-1}\inc h_1=\inc$ on $S^p\times D^q_-$.
Also if $H_1^{-1}gh_1(S^p\times \Int D^q_+)\not\subset \Int D^m_+$, then there is $x\in S^p\times \Int D^q_+$
such that $gh_1(x)\in H(D^m_-)$.
Then $gh_1(x)=H\inc(y)=\inc h_1(y)=gh_1(y)$ for some $y\in S^p\times D^q_-$.
This contradicts to the fact that $gh_1$ is an embedding.
\qed

\smallskip
An embedding $F:N\times I\to S^m\times I$ is a {\it concordance} if $N\times k=F^{-1}(S^m\times k)$ for each $k=0,1$.
Embeddings are called {\it concordant} if there is a concordance between them.

\smallskip
{\it Proof of (b) for $X=S^p$.}
Take an isotopy $g$ between standardized embeddings.
The restriction $g|_{S^p\times D^q_-}$ is an isotopy between standard embeddings.
So this restriction gives an embedding $g':S^p\times D^q_-\times S^1\to S^m\times S^1$ homotopic to
$\inc|_{S^p\times0}\times \id S^1$.
Since $m+1>2(p+1)$, by general position $g'|_{S^p\times0\times S^1}$ is isotopic to $\inc|_{S^p\times0}\times\id S^1$.
Since $m>2p+q+1$, the Stiefel manifold $V_{m-p,q}$ is $(p+1)$-connected.
Hence every two maps $S^1\times S^p\to V_{m-p,q}$ are homotopic.
Therefore $g'$ is isotopic to $\inc\times \id S^1$.
So  we can make a self-isotopy of $\id S^m$, i.e. a level-preserving autodiffeomorphism of $S^m\times I$ identical on $S^m\times\{0,1\}$, and assume that $g=\inc\times\id I$ on $S^p\times D^q_-\times I$.


Since $m>2p+q+2$, by general position we may assume that $\im g\cap\Delta_1\times I=\partial\Delta_1\times I$.
Then there is a disk $\Delta\subset D^m_-\times I$ such that
$$\Int\Delta\supset\Delta_1\times(0,1),\quad \Delta\cap D^m_-\times\{0,1\}=\Delta_{\sqrt2}\times\{0,1\}
\quad\text{and}\quad \im g\cap \Delta=\partial\Delta_1\times I.$$
Take the `standard' $q$-framing on $\Delta$ tangent to $\inc(\sqrt2D^{p+1}\times S^q)\times I$
whose restriction to $\partial\Delta_1\times I$ is the `standard' normal $q$-framing of $\partial\Delta_1\times I$ in $\im g$.
Then the `standard' $(m-p-q-1)$-framing on $\partial\Delta_1\times I$ normal to
$\inc(\sqrt2D^{p+1}\times S^q)\times I$ is an $(m-p-q-1)$-framing on $\partial\Delta_1\times I$ normal to $\im g$.
Using these framings we construct

$\bullet$ an orientation-preserving embedding $H:D^m_-\times I\to D^m_-\times I$ onto a neighborhood of
$\Delta_1\times I$ in $D^m_-\times I$,
and

$\bullet$ an isotopy $h_t$ of $\id T^{p,q}\times I$ shrinking $S^p\times D^q_-\times I$ to a neighborhood of
$S^p\times\{-1_q\}\times I$ in $S^p\times D^q_-\times I$ such that
$$H(\Delta_{\sqrt2}\times I)=\Delta,\quad H(\inc(S^p\times D^q_-)\times I)=H(D^m_-\times I)\cap\im g$$
$$\text{and}\quad H\circ(\inc\times\id I)=(\inc\times\id I)\circ h_1 \quad\text{on}\quad S^p\times D^q_-\times I.$$
Analogously to the proof of (a) embedding $H$ is isotopic to $\id(D^m_-\times I)$,
such an isotopy extends to an isotopy $H_t$ of $\id(S^m\times I)$, and $H_t^{-1}gh_t$ is an isotopy from $g$
to a standardized isotopy $H_1^{-1}gh_1$.
\qed

\subsection{Proof of the Group Structure Lemma \ref{t:grst}}\label{0progro}

Let us prove that {\it the sum is well-defined}, i.e. that for standardized embeddings $f,g:X\times S^q\to S^m$ the isotopy class of $s_{f,g}$ depends only on $[f]$ and $[g]$, but not on the chosen standardizations of $f_0$ and $f_1$.
For this let us define parametric connected sum of isotopies.
Take isotopic standardized embeddings $f,f'$ and $g,g'$.
By the Standardization Lemma \ref{l:stand}.b there are standardized isotopies
$F,G:X\times S^q\times I\to S^m\times I$ between $f$ and $f'$, $g$ and $g'$.
Define an isotopy
$$S:X\times S^q\times I\to S^m\times I\quad\text{by}
\quad S(x,y,t)=\begin{cases} F(x,y,t)& y\in D^q_+\\
R(G(x,Ry,t))      &y\in D^q_-\end{cases}.$$
Then $S$ is an isotopy between $s_{fg}$ and $s_{f'g'}$.

(The isotopy class of $S$ may depend on $F,G$ not only on their isotopy classes.)

Clearly, {\it $\inc:X\times S^q\to S^m$ represents  the zero element.}

Denote by $R^t$ be the rotation of $\R^s=\R^2\times\R^{s-2}$ whose restriction to the plane $\R^2\times0$ is the rotation through the angle $+\pi t$ and which leaves the orthogonal complement fixed.

Let us prove {\it the commutativity}.
Each embedding $f:T^{p,q}\to\R^m$ is isotopic to $R^{-t}\circ f\circ (\id S^p\times R^t)$.
Hence the embedding $s_{fg}$ is isotopic to $R^1\circ s_{fg} \circ(\id S^p\times R^1)=s_{gf}$.

Let us prove {\it the associativity}.
Define $D^m_{++}\subset S^m$ by equations $x_1\ge0$ and $x_2\ge0$.
Define $D^m_{+-},D^m_{-+},D^m_{--}$ analogously.
Analogously to (or by) the Standardization Lemma \ref{l:stand}.a each element in $E^m(X\times S^q)$ has a representative $f$ such that
$$f(S^p\times D^q_{++})\subset D^m_{++}\quad\text{and}\quad f|_{S^p\times(D^q-D^q_{++})}=\inc.$$
Let $f,g,h:X\times S^q\to S^m$ be such representatives of three elements of $E^m(X\times S^q)$.
Then both $[f]+([g]+[h])$ and $([f]+[g])+[h]$ have a representative defined by
$$s_{fgh}(x,y):=
\begin{cases} f(x,y)& y\in D^q_{++} \\
R_{1/2}(g(x,R_{1/2}y))& y\in D^q_{+-} \\
R_1(h(x,R_1y))& y\in D^q_{--} \\
\inc(x,y)& y\in D^q_{-+}
\end{cases}.$$

Let us prove that $[\overline f]+[f]=0$.
Clearly, $\overline f$ is standardized.
Embedding $s_{f\overline f}$ can be extended to an embedding $X\times D^{q+1}\to D^{m+1}$ as follows.
(Analogous minor modification should be done in \cite[1.6]{Ha66A} because the embedding $D^{n+1}\to D^{n+q+1}$ constructed there is not orthogonal to the boundary.)
Represent an element of  $D^u$ as $(a_0,a)$, where $a_0\in [-1,1]$ and $a\in\sqrt{1-a_0^2}D^{u-1}$.
Define
$$\gamma:D^u\to D^u\quad\text{by}\quad \gamma(a_0,a):=\left(\frac{a_0(1+|a|^2)}{1+\sqrt{1-a_0^2-a_0^2|a|^2}},a\right) \quad\text{and}$$
$$H:X\times D^{q+1}\to D^{m+1}\quad\text{by}\quad
H(x,\gamma(y)):=\gamma(f(x,y)).$$
Since $RR_1=R_2$, the map $H$ is well-defined.
Using $\pr_1\gamma(a_0,a)=\frac1{a_0}-\sqrt{\frac{1-a_0^2}{a_0^2}-|a|^2}$ for $a_0\ne0$,
one can check that $H$ is a smooth embedding (in particular, orthogonal to the boundary).
Hence embedding $s_{f\overline f}$ is isotopic to $\inc$ by the following Triviality Lemma \ref{l:tricri}.
\qed

\begin{Lemma}[Triviality Lemma]\label{l:tricri}
Let $X$ denote either $D^p_+$ or $S^p$.
Assume that $m\ge 2p+q+3$ for $X=S^p$ and  $m\ge q+3$ for $X=D^p_+$.
An embedding $X\times S^q\to S^m$ is isotopic to $\inc$ if and only if it extends to an embedding
$X\times D^{q+1}\to D^{m+1}$.
\end{Lemma}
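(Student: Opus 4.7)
I treat both directions of the equivalence.

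\emph{The ``only if'' direction.} Suppose $f$ is isotopic to $\inc|_{X\times S^q}$. Apply the Isotopy Extension Theorem for manifolds with boundary to this isotopy, viewed as living in $S^m=\partial D^{m+1}$: one obtains a level-preserving ambient isotopy of $D^{m+1}$ preserving the pair $(D^{m+1},S^m)$. Applying the time-$1$ diffeomorphism to the standard extension $\inc:X\times D^{q+1}\to D^{m+1}$ yields the desired extension of $f$.

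\emph{The ``if'' direction, case $X=D^p_+$.} Here $X\times D^{q+1}$ is a smooth compact $(p+q+1)$-disk. By the Disk Theorem, the two smooth embeddings $F,\inc:X\times D^{q+1}\to D^{m+1}$ are ambient isotopic in $D^{m+1}$; by using the partial boundary condition $F(D^p_+\times S^q)\subset S^m$ together with collar neighborhood uniqueness near this face, one arranges the ambient isotopy to preserve $S^m$ setwise. Restricting to $S^m$ then produces the desired isotopy of $f$ to $\inc|_{X\times S^q}$.

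\emph{The ``if'' direction, case $X=S^p$.} I proceed in three steps. \emph{Step 1:} since $m\ge 2p+q+3$ gives $2(m+1)\ge 3p+4$, the Haefliger unknotting theorem says $F|_{S^p\times 0_{q+1}}:S^p\to D^{m+1}$ is isotopic to $\inc|_{S^p\times 0_{q+1}}$; by isotopy extension supported in $\Int D^{m+1}$, I may assume $F$ agrees with $\inc$ on $S^p\times 0_{q+1}$. \emph{Step 2:} the restrictions $F|_{S^p\times D^{q+1}}$ and $\inc|_{S^p\times D^{q+1}}$ now exhibit two $(q+1)$-disk subbundles, with trivializations coming from the product structure, of the normal $(m{+}1{-}p)$-disk bundle of $\inc(S^p\times 0_{q+1})$ in $D^{m+1}$. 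By uniqueness of tubular neighborhoods, these two subbundles differ by a homotopy class in $[S^p, V_{m+1-p,\,q+1}]$; since $V_{m+1-p,\,q+1}$ is $(m-p-q-1)$-connected and the hypothesis $m\ge 2p+q+3$ gives $m-p-q-1\ge p+2$, this class is trivial, so $F$ is isotopic to $\inc$ rel $S^p\times 0_{q+1}$ through embeddings $S^p\times D^{q+1}\to D^{m+1}$. \emph{Step 3:} promote this isotopy to a level-preserving ambient isotopy of the pair $(D^{m+1},S^m)$ by boundary-preserving isotopy extension, and restrict to $S^m$ to obtain the desired isotopy of $f$ to $\inc|_{S^p\times S^q}$.

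\textbf{Main obstacle.} The technical heart is Step 2 of the $X=S^p$ case, where the dimension bound $m\ge 2p+q+3$ enters precisely through the connectivity of $V_{m+1-p,\,q+1}$. A pervasive subsidiary issue, present in every part of the argument, is ensuring that each application of the Isotopy Extension Theorem yields an ambient isotopy of $D^{m+1}$ respecting the boundary pair $(D^{m+1},S^m)$; only then does restriction to $S^m$ produce an isotopy of embeddings $X\times S^q\to S^m$ in the intended sense, rather than merely an isotopy in $D^{m+1}$.
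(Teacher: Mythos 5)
Your proof of the ``only if'' direction is fine, and is essentially the paper's capping argument phrased via isotopy extension. The ``if'' direction, however, has a genuine gap in both cases, and in both cases it is the same gap: you treat a \emph{proper} embedding (one that meets $\partial D^{m+1}=S^m$) as if it lived in the interior of $D^{m+1}$.

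For $X=D^p_+$, the Disk Theorem (Palais/Cerf) applies to embeddings of a disk into the \emph{interior} of a manifold. Here $F(D^p_+\times D^{q+1})$ is a proper submanifold meeting $S^m$ in $F(D^p_+\times S^q)$; there is no off-the-shelf disk theorem for that situation, and ``collar uniqueness near this face'' does not upgrade an interior ambient isotopy to one preserving the pair $(D^{m+1},S^m)$. For $X=S^p$, Step~2 misidentifies $F(S^p\times D^{q+1})$ as a $(q+1)$-disk subbundle of the normal disk bundle of $\inc(S^p\times 0)$: a tubular neighborhood of $S^p\subset\Int D^{m+1}$ lies in the interior, whereas $F(S^p\times D^{q+1})$ reaches all the way to $S^m$. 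The tubular-neighborhood/connectivity argument you give only controls the \emph{germ} of $F$ near $S^p\times 0$, so it lets you assume $F=\inc$ on $S^p\times\varepsilon D^{q+1}$ for small $\varepsilon$. At that point the annular region $S^p\times(D^{q+1}\setminus\varepsilon\Int D^{q+1})\cong S^p\times S^q\times I$ exhibits $F|_{S^p\times S^q}$ and $\inc|_{S^p\times S^q}$ as \emph{concordant} embeddings --- which is exactly your original problem, not its solution. The missing step is precisely ``concordance implies isotopy.''

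The paper closes this gap by a different route: it first isotopes the extension (rel $X\times\partial\Delta^{q+1}$) to a concordance $g:X\times S^q\times I\to S^m\times I$ from the given embedding to $\inc$, and then invokes Hudson's concordance-implies-isotopy theorems --- \cite[Corollary~1.4]{Hu70} for $X=S^p$, and \cite[Theorem~1.5]{Hu70} together with a normal $p$-framing argument for $X=D^p_+$. That is the key lemma your argument is missing, and it is where the dimension restrictions $m\ge p+q+3$ (resp.\ $m\ge q+3$) are actually used in the proof.

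Your dimension counting in Step~2 (connectivity of $V_{m+1-p,\,q+1}$) is correct as far as it goes, and a germ-level comparison of that kind could be used as part of a correct proof; but by itself it does not yield an isotopy of proper embeddings of $S^p\times D^{q+1}$, and so does not yield the desired isotopy in $S^m$.
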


{\it Proof.}
For each $u$ observe that $D^u\cong \Delta^u:=D^u\bigcup\limits_{S^{u-1}=S^{u-1}\times0} S^{u-1}\times I$.

The `only if' part is proved by `capping' an isotopy $X\times S^q\times I\to S^m\times I$ to $\inc$, i.e.  by taking its union with $\inc:X\times D^{q+1}\to D^{m+1}$.

(The `only if' part does not require the dimension assumption.)

Let us prove the `if' part.
Represent the extension as an embedding $f:X\times\Delta^{q+1}\to\Delta^{m+1}$ such that
$f^{-1}(\partial\Delta^{m+1})=X\times\partial\Delta^{q+1}$.
Analogously to the Standardization Lemma \ref{l:stand}.a
$f$ is isotopic relative to $X\times\partial\Delta^{q+1}$ to an embedding $g$ such that
$$g(X\times S^q\times I)\subset S^m\times I\quad\text{and}
\quad g=\inc\quad\text{on}\quad X\times D^{q+1}.$$
Then the restriction $g:X\times S^q\times I\to S^m\times I$ of $g$ is a concordance from given embedding
$g_0:X\times S^q\to S^m$ to $\inc$.

If $X=S^p$, then $m\ge p+q+3$.
Hence concordant embeddings $g_0$ and $\inc$ are isotopic by \cite[Corollary 1.4]{Hu70} for $Q=S^m$, $X_0=Y=\emptyset$.

If $X=D^p_+$, then $m\ge q+3$.
Hence the concordance $g:1_p\times S^q\times I\to S^m\times I$
is isotopic to an isotopy  relative to $1_p\times S^q\times\{0,1\}$ by \cite[Theorem 1.5]{Hu70} for $Q=S^m$, $X_0=Y=\emptyset$.
The latter isotopy has a normal $p$-framing that coincides with the $p$-framing on $1_p\times S^q\times\{0,1\}$
defined by  $g_0|_{1_p\times S^q}$ and $\inc|_{1_p\times S^q}$.
The vectors at $(x,t)\in S^m\times I$ are tangent to $S^m\times\{t\}$.
So the latter isotopy extends to an isotopy between $g_0$ and $\inc$.
\qed

\begin{Remark}\label{r:pl}
(a) The dimension restriction in the Triviality Lemma \ref{l:tricri} for $X=S^p$ could be relaxed to $m\ge\max\{2p+q+2,p+q+3\}$.
This is so by Remark \ref{r:direst}.c and because we use analogue of the Standardization Lemma \ref{l:stand}.a not \ref{l:stand}.b.

(b) The analogue of the Triviality Lemma \ref{l:tricri} $m=q+2$ is false (for $X=D^2_+, S^1,D^1_+,S^0$).
Indeed, take an embedding $f:S^q\to S^{q+2}$ non-isotopic to the standard embedding but concordant to it. 
The latter means that $f$ extends to a proper embedding $D^{q+1}\to D^{q+3}$. 
The latter has trivial normal bundle, which defines an embedding $D^2_+\times D^{q+1}\to D^{q+3}$ whose restriction to $D^2_+\times S^q$ an embedding to $S^{q+2}$.
The restrictions of the latter embedding to $X\times S^q$, where $X=D^2_+, S^1,D^1_+,S^0$, are not isotopic to $\inc$, but extend to embeddings $X\times D^{q+1}\to D^{q+3}$. 
(Details of this argument appeared in a discussion with T. Garaev.)

(c) The analogue of the Triviality Lemma \ref{l:tricri} for $X=S^p$, $p\ge2$ and $m=2p+q+1$ is false.
Indeed, let $x\in\pi_p(V_{p+q+1,q+1})\cong\Z_{(p)}$ be a generator.
The map $\tau$ is defined before Corollary \ref{t:corlam}.
Then the embedding $\tau^m_{q,p}(x)|_{T^{q,p}}:T^{q,p}\to S^m$ is not isotopic to $\inc$ but extends to an embedding $T^{q+1,p}_+\to D^{m+1}_+$.
Indeed,

$\bullet$ the required extension is $\tau^{m+1}_{q+1,p}(\widehat x)|_{T^{q+1,p}_+} : T^{q+1,p}_+\to D^{m+1}_+$, where $\widehat x$ is the image  of $x$ under the stabilization-induced map $\pi_p(V_{p+q+1,q+1})\to\pi_p(V_{p+q+2,q+2})$;

$\bullet$ the non-existence of isotopy follows by \cite[Torus Lemma 6.1]{Sk02} because $x\ne0$ and $2p+q+1\ge \frac{3p}2+q+2$.
\end{Remark}

\subsection{Proof of the Smoothing Lemma \ref{t:smo}}\label{0prosmho}

\begin{Lemma}[proved below]\label{l:smohom}
For $m\ge 2p+q+3$ there is a homomorphism
$$\overline\sigma: E^m(T^{p,q})\to E^m(S^{p+q})\quad\text{such that}\quad \overline\sigma\circ\cs=\id E^m(S^{p+q}).$$
\end{Lemma}

{\it The Smoothing Lemma \ref{t:smo} follows} because Lemma \ref{l:smohom} and $q_{\#}\circ\cs=0$ imply that
$q_{\#}\oplus\overline\sigma$ is an isomorphism.
Lemma \ref{l:smohom} is known \cite[Proposition 5.6]{CRS} except for the non-trivial assertion that $\overline\sigma$ is a homomorphism.

\smallskip
{\it Proof of Lemma \ref{l:smohom}: definition of $\overline\sigma$ and proof that
$\sigma\circ\cs=\id E^m(S^{p+q})$.}
The map $\overline\sigma$ is `embedded surgery of $S^p\times1_q$'.
We give an alternative detailed construction following
\cite[Proposition 5.6]{CRS}.
Take $f\in E^m(T^{p,q})$.
By the Standardization Lemma \ref{l:stand}.a there is a standardized representative $f':T^{p,q}\to S^m$ of $f$.
Identify
$$S^{p+q}\quad\text{and}\quad S^p\times D^q_+\bigcup\limits_{S^p\times \partial D^q_+} D^{p+1}\times \partial D^q_+$$
by a diffeomorphism.
Define an embedding
$$
i':D^{p+1}\times \partial D^q_+\to D^m_-\quad\text{by}\quad i'(x,(0,y)):=(-\sqrt{1-|x|^2},y,0_l,x)/\sqrt2.
$$
Then $i'$ is an extension of the restriction $S^p\times \partial D^q_+\to\partial D^m_+$ of $\inc_{m,p,q}$.
Infinite derivative for $|x|=1$ means that $i'$ meets the boundary regularly.
Hence $i'$ and $f'|_{S^p\times D^q_+}$ form together a ($C^1$-smooth) embedding $g:S^{p+q}\to S^m$.
Let $\overline\sigma(f):=[g]$.

The map $\overline\sigma$ is well-defined for $m\ge 2p+q+3$ by the Standardization Lemma \ref{l:stand}.b because the above construction of $\overline\sigma$ has an analogue for isotopy, cf. beginning of \S\ref{0progro}.

Clearly, $\overline\sigma\circ\cs(g)=\overline\sigma(0\#g)=\overline\sigma(0)+g=0+g=g$.

\begin{figure}[h]
\centerline{\includegraphics[width=7cm]{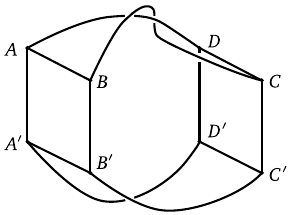}}
\caption{{\it To the proof that $\overline\sigma$ is a homomorphism.}
This picture illustrates the proof by the case $p=0$, $q=1$ and $m=3$
(these values are not within the dimension range $m\ge2p+q+3$).
The part above plane $ABCD$ stands for $\widehat{D^m_+}$.
The part below plane $A'B'C'D'$ stands for $\widehat{D^m_-}$.
The part between the planes stands for $S^{m-1}\times D^1$.
The upper curved lines stand for $f_+(S^p\times S^{q-1})=u(S^p\times S^{q-1})$.
The bottom curved lines stand for $f_-(S^p\times S^{q-1})=u(S^p\times S^{q-1})$.
The union of segments $A'A,B'B,C'C$ and $D'D$ stands for $u(S^p\times S^{q-1}\times D^1)$.
The union of segments $A'A$ and $B'B$ stands for $\inc(S^p\times 1_{q-1})\times D^1$.
The quadrilateral $A'ABB'$ stands for the `surgery disk' $\inc(D^{p+1}\times D^{q-1}_+)\times D^1$.
The union of the upper curved lines and the segment $AB$ stands for the $(p+q)$-disk $\Delta_+$.
Analogously for $\Delta_-$.
The union of $\Delta_+,\Delta_-$ and the segments $C'C$ and $D'D$ stands for the $(p+q)$-sphere
that is the image of a representative of $\overline\sigma[u]$.
The union of $\Delta_+$ and $CD$ stands for $\Sigma_+$.
Analogously for $\Sigma_-$.
The quadrilateral $C'CDD'$ stands for the tube $\inc(D^{p+1}\times D^{q-1}_-)\times D^1$.
}
\label{f:homom}
\end{figure}

\smallskip
{\it Proof of Lemma \ref{l:smohom}: beginning of the proof that $\overline\sigma$ is a homomorphism.}
See fig. \ref{f:homom}.
Denote by $\widehat A$ a copy of $A$.
For each $n$ identify
$$S^n\quad\text{with}\quad
\widehat{D^n_+} \bigcup\limits_{\widehat{\partial D^n_+}=S^{n-1}\times1} S^{n-1}\times D^1
\bigcup\limits_{S^{n-1}\times\{-1\}=\widehat{\partial D^n_-}} \widehat{D^n_-}.$$
Then $S^{n-1}=S^{n-1}\times0\subset S^n$.
Let $\inc=\inc_{m-1,p,q-1}$.
Under the identifications $\widehat{\partial D^n_\pm}=S^{n-1}\times\{\pm1\}$, $n\in\{m,q\}$, the embedding $\inc_{m,p,q}$ goes to $\inc|_{S^p\times S^{q-1}}$.
Hence analogously to (or by) the Standardization Lemma \ref{l:stand}.a each element in $E^m(T^{p,q})$ has a representative
$f$ such that

$\bullet$ $f(S^p\times \widehat{D^q_+})\subset\widehat{D^m_+}$;

$\bullet$ $f=\inc_{m,p,q}$ on $S^p\times \widehat{D^q_-}$ (the image of this embedding lyes in $\widehat{D^m_-}$);

$\bullet$ $f=\inc|_{S^p\times S^{q-1}}\times\id D^1$ on $S^p\times S^{q-1}\times D^1$ (the image of this embedding lies in $S^{m-1}\times D^1$).


Take embeddings $f_\pm:T^{p,q}\to S^m$ satisfying the above properties.
Then $[f_+]+[f_-]$ has a representative $u:T^{p,q}\to S^m$ such that

$\bullet$ $u=f_+$ on $S^p\times\widehat{D^q_+}$;

$\bullet$ $u=(\id S^p\times R)\circ f_-\circ(\id S^p\times R)$ on $S^p\times\widehat{D^q_-}$;

$\bullet$ $u=\inc|_{S^p\times S^{q-1}}\times\id D^1$ on $S^p\times S^{q-1}\times D^1$.


\smallskip
For completion of the proof that $\overline\sigma$ is a homomorphism we need an equivalent definition of $\overline\sigma$.

First we assume that $p=0$, i.e. define the embedded connected sum of embeddings $f_{-1},f_1:S^q\to S^m$ whose images are disjoint.
Take an embedding $l:D^1\times D^q_-\to S^m$ such that
$$l=f_k\quad\text{on}\quad k\times D^q_-\quad\text{and}\quad l(D^1\times D^q_-)\cap f_k(S^q)=l(k\times D^q_-)\quad\text{for}\quad k=\pm1.$$
Define $h:S^q\to S^m$ by
$$
h(x):=\begin{cases}f_0(x) &x\in \widehat{D^q_+} \\ l(x) &x\in D^1\times\partial D^q_+ \\ f_1(x) &x\in \widehat{D^q_-} \end{cases}.
$$
Then a representative of $[f_0]+[f_1]$ is obtained from $h$ by smoothing of the `dihedral corner' along
$h(S^0\times\partial D^q_+)$.
This smoothing is local replacement of embedded $(I\times0\cup0\times I)\times D^{q-1}$ by
embedded $C\times D^{q-1}$, where $C\subset I^2$ is a smooth curve joining $(0,1)$ to $(1,0)$ and such that
$C\cup[1,2]\times0\cup0\times[1,2]$ is smooth.
This smoothing is `canonical', i.e. does not depend on the choice of $C$.
Cf. \cite[Proof of 3.3]{Ha62k} and, for non-embedded version, \cite{U}.

Let us generalize this definition to arbitrary $p$.
Given embedding $f:T^{p,q}\to S^m$, take an embedding $l:D^{p+1}\times D^q_-\to S^m$ such that
$$l=f\quad\text{on}\quad S^p\times D^q_-\quad\text{and}\quad l(D^{p+1}\times D^q_-)\cap f(T^{p,q})=l(S^p\times D^q_-).$$
Define $h:S^{p+q}\to S^m$ by
$$
h(x):=\begin{cases}f(x) &x\in S^p\times D^q_+ \\ l(x) &x\in D^{p+1}\times\partial D^q_+ \end{cases}.
$$
Then a representative of $\overline\sigma(f)$ is obtained from $h$ by `canonical' smoothing of the `dihedral corner' along
$h(S^p\times \partial D^q_+)$ analogous to the above case $p=0$.

This definition is equivalent to that from the beginning of proof of Lemma \ref{l:smohom}
because there are a closed $\varepsilon$-neighborhood $U$ of the image of $l$ (for some small $\varepsilon>0$)
and a self-diffeomorphism $G:S^m\to S^m$ such that
$G(D^m_-,\inc(T^{p,q}_-),i'(D^{p+1}\times \partial D^q_+))=(U,U\cap f(T^{p,q}),U\cap h(S^{p+q}))$.

The result of the above surgery does not depend on the choices involved because $\overline\sigma(f)$ is well-defined.

\smallskip
{\it Completion of the proof that $\overline\sigma$ is a homomorphism.}
(This argument appeared after a discussion with A. Zhubr, cf. \cite{Zh}.)
Recall that a representative of $\overline\sigma[u]$ is obtained from $u$ by `embedded surgery of $\inc(S^p\times1_{q-1})\times0$'.
Recall that the isotopy class of an embedding $g:S^{p+q}\to S^m$ is defined by the image of $g$ and an orientation on the image.

Denote
$$\Delta_\pm\ :=\ u(S^p\times D^q_\pm)\ \cup\ \inc(D^{p+1}\times D^{q-1}_+)\times\{\pm1\}
\ \cong\ S^p\times D^q\cup D^{p+1}\times D^{q-1}_+\ \underset{PL}\cong\ D^{p+q}.$$
Then the oriented image of the representative of $\overline\sigma[u]$ is obtained by `canonical' smoothing of corners from
$$\left(u(T^{p,q})-\inc(S^p\times D^{q-1}_+)\times D^1\right)
\cup(\inc\times\id D^1)\left(D^{p+1}\times\partial(D^{q-1}_+\times D^1)\right)=$$
$$=\Delta_-\cup \inc\partial(D^{p+1}\times D^{q-1}_-)\times D^1\cup\Delta_+
\ \underset{PL}\cong\ D^{p+q}\times0\cup S^{p+q-1}\times I\cup D^{p+q}\times 1\ \underset{PL}\cong\ S^{p+q}.$$

This oriented $(p+q)$-sphere is a connected sum of oriented $(p+q)$-spheres
$$\Sigma_\pm:=\Delta_\pm\cup \inc(D^{p+1}\times D^{q-1}_-)\times\{\pm1\}
\ \underset{PL}\cong\ 0\times D^{p+q}\cup D^{p+q}_+\ \underset{PL}\cong\ S^{p+q}$$
along the tube $\inc(D^{p+1}\times D^{q-1}_-)\times D^1$.
The image of  a representative of $\overline\sigma[f_\pm]$ is obtained from $\Sigma_\pm$ by `canonical' smoothing of the `dihedral corner'.
The corners of the tube $\inc(D^{p+1}\times D^{q-1}_-)\times D^1$ can be `canonically' smoothed to obtain an embedding $D^{p+q}\times D^1\to S^m$.
Thus $\overline\sigma[u]=\overline\sigma[f_+]+\overline\sigma[f_-]$.
\qed

\subsection{Appendix: new direct proof of Corollary \ref{t:corlam}.a} \label{0newpro}


{\bf Definition of the following diagram for $m\ge p+q+3$.}
$$\xymatrix{
\pi_{q+1}(V_{m-q,p}) \ar[d]^{\tau_p} \ar[r]^{\lambda''} & \pi_q(S^l) \ar[r]^{\mu''}  \ar[rd]_{\sigma'} &
\pi_q(V_{m-q,p+1}) \ar[r]^{\nu''} \ar[d]^{q_{\#}r\tau_{p+1}} & \pi_q(V_{m-q,p}) \ar[r]^{\lambda''} \ar[d]^{\tau_p} & \pi_{q-1}(S^l) \\
E^{m+1}(T^{p,q+1}_+) \ar[ru]_{\lambda'} &  & E^m_\#(T^{p,q}) \ar[r]_{\overline\nu} & E^m(T^{p,q}_+) \ar[ru]_{\lambda'} }.$$
Here $q_{\#},r,\lambda'$ and $\tau_p:=\tau^m_{p,q}$ are defined in \S\S \ref{0stat}, \ref{0meth}, \ref{0modulo},

$\bullet$ the $\mu''\nu''\lambda''$ sequence is the exact sequence of the `forgetting the last vector' bundle
$S^l\to V_{m-q,p+1}\to V_{m-q,p}$,

$\bullet$ the map $\overline\nu$ is well-defined by $\overline\nu q_{\#}\pr[f]:=[f|_{T^{p,q}_+}]$.

\smallskip
{\bf Definition of $\sigma'=\sigma'_p$.}
For a map $x:S^q\to S^l$ representing an element of $\pi_q(S^l)$ let
$\zeta_x$ be the composition
$$D^{p+1}\times S^q\overset{\inc_{p+q,p,q}\times x}\to D^{p+q+1}\times S^l\overset{\inc}\to S^m\overset{h}\to S^m,$$
where $\inc:=\inc\phantom{}_{m,p+q,l}$ and $h$ is the exchange of the first $l+1$ and the last $p+q+1$ coordinates.
The image of $\zeta_x$ is contained in $h\inc(D^{p+q+1}\times S^l)$ and so is disjoint from
$S^{p+q}=h\inc(S^{p+q}\times0_{l+1})$.
Join $S^{p+q}$ to $\zeta_x(-1_p\times S^q)$ by an arc whose interior misses $S^{p+q}\cup\zeta_x(D^{p+1}\times S^q)$.
Let $\sigma'[x]$ be the equivalence class of
the embedded connected sum of the inclusion $S^{p+q}\subset S^m$ and $\zeta_x|_{T^{p,q}}$ along the arc.
Clearly, $\sigma'$ is well-defined for $m\ge p+q+3$, and is a homomorphism
(and $\sigma'=\sigma_\#(h^{-1}\zeta)$ in the notation of \S\ref{0modulo}).

\begin{Lemma}\label{l:exact}
(a) The $\sigma'\overline\nu\lambda'$-sequence is exact for $2m\ge p+3q+4$.

(b) The right $\tau_p$ is an isomorphism for $2m\ge3q+5$ and an epimorphism $2m\ge3q+4$.

(c) The diagram is commutative up to sign for $m\ge2p+q+2$ and $2m\ge 2p+3q+4$.
\end{Lemma}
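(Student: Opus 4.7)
The plan is to dispatch (b), then (c), then (a), since (b) is immediate from earlier results, (c) furnishes commutativity identities used in (a), and (a) is the deepest part.

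For (b), the map $\tau_p:\pi_q(V_{m-q,p})\to E^m(T^{p,q}_+)$ fits into the exact sequence of Theorem \ref{l:xitaurho} (indexed so that the disc factor is $D^p$):
$$E^{m+1}(S^{q+1})\xrightarrow{\xi}\pi_q(V_{m-q,p})\xrightarrow{\tau_p}E^m(T^{p,q}_+)\xrightarrow{\rho}E^m(S^q).$$
Under $2m\ge 3q+4$, statement (S) of \S\ref{0metca} gives $E^m(S^q)=0$, so $\tau_p$ is an epimorphism; if additionally $2m\ge 3q+5$, then $2(m+1)\ge 3(q+1)+4$, so (S) also gives $E^{m+1}(S^{q+1})=0$ and $\tau_p$ becomes an isomorphism.

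For (c), each slanted identity $\lambda'\tau_p=\lambda''$ is obtained by unpacking definitions: $\tau_p[x]$ produces a $p$-framed thickening of the standard embedding, and the obstruction $\lambda'$ to extending the framing by one extra unit normal vector on $f(1_p\times S^q)$ is by construction the connecting homomorphism of the Stiefel fibration $S^l\to V_{m-q,p+1}\to V_{m-q,p}$. The middle square $\overline\nu\circ q_{\#}r\tau_{p+1}=\tau_p\circ\nu''$ holds because forgetting the last vector of a $(p+1)$-frame corresponds to restricting the associated thickened embedding from $D^{p+1}\times S^q$ down to $D^p\times S^q$. The identity $q_{\#}r\tau_{p+1}\mu''=\pm\sigma'$ is checked by direct comparison of the two geometric constructions: $\tau_{p+1}(\mu''[x])$ thickens $S^q$ by appending a constant $p$-frame to $x:S^q\to S^l$, and after restriction to $T^{p,q}$ and projection to $E^m_\#$ (which kills local knots) this coincides, up to an orientation sign, with the embedded connected sum recipe defining $\sigma'[x]$.

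For (a), exactness at $E^m(T^{p,q}_+)$ is deduced from Lemma \ref{l:strip} via the factorizations $\nu'=\nu r$ and $\overline\nu\circ q_{\#}=\nu$. The composition $\lambda'\overline\nu$ vanishes because for any $f:T^{p,q}\to S^m$ the image under $df$ of the tangential push-off direction from $D^p_+$ into $D^p_-$ at $1_p$ is a vector tangent to $f(T^{p,q})$ but transverse to $f(T^{p,q}_+)$, whose component normal to $f(T^{p,q}_+)$ along $f(1_p\times S^q)$ supplies a nonvanishing section of the relevant normal bundle. Conversely, $\ker\lambda'=\im\nu'=\im(\nu\circ r)\subseteq\im\nu=\im\overline\nu$ by Lemma \ref{l:strip}. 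At $E^m_\#(T^{p,q})$ the composition $\overline\nu\sigma'=\pm\nu r\tau_{p+1}\mu''=\pm\nu'\mu'=0$ by (c) and Lemma \ref{l:strip}.

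The main obstacle is the reverse inclusion $\ker\overline\nu\subseteq\im\sigma'$. Given $[f]\in E^m_\#(T^{p,q})$ with $\overline\nu[f]=0$, apply the Standardization Lemma \ref{l:stand} to choose a representative $f$ with $f|_{T^{p,q}_+}=\inc|_{T^{p,q}_+}$. Then $f|_{T^{p,q}_-}:D^p_-\times S^q\to S^m$ is an embedding with standard prescribed behavior on the boundary $S^{p-1}\times S^q$ and image disjoint from $\inc(T^{p,q}_+)$. Under $2m\ge p+3q+4$, a general-position and obstruction-theoretic analysis in the spirit of \cite{Sk02} and the proof of Lemma \ref{l:strip} identifies the isotopy class of such a relative half-embedding, modulo embedded connected sums with $(p+q)$-spheres, with an element $[x]\in\pi_q(S^l)$ measuring the homotopy class of a transverse direction field. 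Tracing through the construction of $\sigma'$ then shows $\sigma'[x]=[f]$ in $E^m_\#(T^{p,q})$, completing the proof.
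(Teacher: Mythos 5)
Part (b) is fine and essentially parallels the paper's approach (applying the exact sequence of Theorem \ref{l:xitaurho} and the vanishing statements (S), (D)).

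Part (c) has a genuine gap at the most delicate identity. You assert that $q_{\#}r\tau_{p+1}\mu''=\pm\sigma'$ follows ``by direct comparison of the two geometric constructions''. In fact the two constructions are \emph{not} the same embedding, even up to sign: they differ by the unframed second Kirby move (see Remark \ref{r:p=0}.a), and one must argue separately that they become equal after $q_{\#}$. The paper does this for $p=0$ using the invariants $\lambda_-$ and $r_\pm$ together with statement (L) of \S\ref{0metca}, and then reduces the case of general $p$ to $p=0$ via the Extension Lemma \ref{l:ext} (this is why the hypothesis $m\ge 2p+q+2$ is needed). Both steps are absent from your write-up; the phrase ``coincides up to an orientation sign'' is precisely what needs proof, not a proof.

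Part (a) has several gaps. First, exactness at $\pi_q(S^l)$ --- i.e.\ $\sigma'\lambda'=0$ and $\ker\sigma'\subseteq\im\lambda'$ --- is not addressed at all; the paper devotes most of \S\ref{0prolex} (including the framed-cobordism argument around Figure \ref{f:lambda}) to exactly this. Second, your geometric argument that $\lambda'\overline\nu=0$ is incorrect: $1_p=(1,0_p)$ lies in the \emph{interior} of $D^p_+$, so there is no ``push-off direction from $D^p_+$ into $D^p_-$'' at $1_p$; moreover, since $T^{p,q}_+$ and $T^{p,q}$ are both $(p+q)$-manifolds with $T^{p,q}_+\subset T^{p,q}$, any vector tangent to $f(T^{p,q})$ at a point of $f(T^{p,q}_+)$ is automatically tangent to $f(T^{p,q}_+)$, so its ``normal component'' vanishes identically and produces no section of the normal bundle. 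The correct argument (as in the paper) passes through $\overline\lambda$ and Lemma \ref{l:isot}.c, together with injectivity of $\Sigma^p$ from the Freudenthal Suspension Theorem. Third, the crucial inclusion $\ker\overline\nu\subseteq\im\sigma'$ is dispatched by a hand-wave (``a general-position and obstruction-theoretic analysis in the spirit of \cite{Sk02}''); this is precisely Lemma \ref{l:isot}.a, which requires Irwin's PL isotopy theorem \cite{Ir65} applied to the half-embeddings rel boundary, and a smoothing-obstruction argument in $E^m(S^{p+q})$ that can be killed by embedded connected sum --- none of which you carry out. (Alternatively, part (a) can be proved in one line by citing (L) and the deeper exact $\nu\sigma(i\zeta\lambda')$-sequence of \cite[Theorem 1.6]{Sk15}, which is what the paper's primary proof does; but you did not take that route either.)
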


Corollary \ref{t:corlam}.a for $p\ge1$ follows from Lemma \ref{l:exact} and 5-lemma.
Corollary \ref{t:corlam}.a for $p=0$ follows from Lemma \ref{l:exact} because $V_{m-q,0}$ is a point, so $q_{\#}r\tau_1$ is the composition of the isomorphisms $\sigma'$ and $(\mu'')^{-1}$.
(This proof is simpler than the proof sketched in \S\ref{0meth} for $p=1$, but is more complicated than the classical proof for $p=0$.)

Lemma \ref{l:exact}.a follows from (L) of \S\ref{0metca} and \cite[Theorem 1.6]{Sk11}
(restated in the proof of Theorem \ref{t:main} in \S\ref{0modulo}).
Cf. \S\ref{0prolex}.

Lemma \ref{l:exact}.b follows by (D) of \S\ref{0metca}, either applying (trivial case of) Theorem \ref{l:xitaurho} or deducing from $E^{m+1}(S^{q+1})=0$ that every isotopy $S^q\times I\to S^m\times I$ between standard embeddings is isotopic to the identical one.

An embedding $f:S^n\times X\to S^m$ is {\it reflection-symmetric} if $f\circ(R_1\times\id X)=R_1\circ f.$

\smallskip
{\it Proof of Lemma \ref{l:exact}.c.}
Clearly, $\tau_p\nu''=\overline\nu q_{\#}r\tau_{p+1}$.
By definitions of $\lambda'$  and $\lambda''$ (\S\ref{0modulo}) $\lambda''=\tau_p\lambda'$.
So it remains to prove that $\sigma'_p=q_{\#}r\tau_{p+1}\mu''_p$.
Take any $x\in\pi_q(S^l)$.

First we prove the case $p=0$ for $2m\ge3q+4$.
Take embeddings $f':T^{0,q}\to S^m$ representing $\sigma'_0x$,
and $f:T^{0,q}\to S^m$ representing $r\tau_1\mu''_0$.
Denote by $\lambda_-f$ the linking coefficient, i.e. the homotopy class of the {\it second} component
in the complement to the first component.
Define $\lambda_-f'$ analogously.
Clearly, $\lambda_-f'=\lambda_-f$.
The restrictions of $f$ and $f'$ to the first component are isotopic to the standard embedding.
Since $2m\ge3q+4$, by (L) of \S\ref{0metca} $q_{\#}[f']=q_{\#}[f]$.

A representative of $r\tau_{p+1}\mu''_px$ is a reflection-symmetric extension of a representative of
$r\tau_p\mu''_{p-1}x$.
A representative of $\sigma'_px$ is a reflection-symmetric extension of a representative of $\sigma'_{p-1}x$.
Since $m\ge2p+q+2$ and $\sigma'_0=q_{\#}r\tau_1\mu''_0$ for $2m\ge3q+4$, we have $\sigma'_p=q_{\#}r\tau_{p+1}\mu''_p$ for $2m\ge 2p+3q+4$ by the following Lemma \ref{l:ext}.
\qed



\begin{Lemma}\label{l:ext}
If $m\ge2p+q+2$ and $f_p,g_p:T^{p,q}\to S^m$ are reflection-symmetric extensions of concordant embeddings
$f,g:T^{p-1,q}\to S^{m-1}$, then $f_p$ and $g_p$ are concordant.
\end{Lemma}


{\it Proof.}
Let $F$ be a concordance between $f$ and $g$.
Denote by $\widehat A$ a copy of $A$.
For each $n$ identify (as in the proof of Lemma \ref{l:smohom})
$$S^n\quad\text{with}\quad
\widehat{D^n_+}\bigcup\limits_{\widehat{\partial D^n_+}=S^{n-1}\times1}
S^{n-1}\times D^1
\bigcup\limits_{S^{n-1}\times\{-1\}=\widehat{\partial D^n_-}}\widehat{D^n_-} \quad\text{and}
$$
$$T^{p,q}\quad\text{with}\quad \widehat{T^{p,q}_+} \bigcup\limits_{\widehat{T^{p-1,q}}=T^{p-1,q}\times0} T^{p-1,q}\times I
\bigcup\limits_{T^{p-1,q}\times1=\widehat{T^{p-1,q}}} \widehat{T^{p,q}_-}.$$
Let $F':T^{p,q}\to S^m$ be an embedding obtained from $f_p|_{T^{p,q}_+}\cup F\cup g_p|_{T^{p,q}_+}$ by these identifications.
Define an embedding
$$i_\varepsilon:D^q\to S^{q+1}\quad\text{by}\quad i_\varepsilon(x):=(\varepsilon x,\sqrt{1-\varepsilon^2|x|^2}).$$
Since $m\ge 2p+q+2$, analogously to the Standardization Lemma \ref{l:stand}.a there are $\varepsilon>0$ and an embedding
$$F'':D^{p+1}_+\times D^q\to S^m\quad\text{such that}\quad
F''|_{S^p\times D^q}=F'\circ(\id S^p\times i_\varepsilon).$$
Let
$$\Sigma := (T^{p,q}-S^p\times i_\varepsilon(\Int D^q))
\bigcup\limits_{S^p\times i_\varepsilon(S^{q-1})=\widehat{T^{p,q-1}}} \widehat{T^{p+1,q-1}_+}.$$
Clearly, $\Sigma\cong S^{p+q}$.
Identify
$$T^{p+1,q}_+\quad\text{with}\quad D^{p+1}_+\times D^q
\bigcup\limits_{D^{p+1}\times S^{q-1}=\widehat{T^{p+1,q-1}_+}\times0} \con\Sigma.$$
Take a piecewise smooth  embedding $T^{p+1,q}_+\to S^m\times I$ obtained by this identification from
the `union' of $F''$ and the cone $\con\Sigma\to S^m\times I$ over $(F'\cup\psi)|_\Sigma$.
This embedding can be shifted to a proper piecewise smooth concordance
$F_+$ between $f_p|_{T^{p,q}_+}$ and $g_p|_{T^{p,q}_+}$, smooth outside a ball.

The complete obstruction to smoothing $F_+$ is in $E^m(S^{p+q})$ \cite{BH70, Bo71}.
If we change concordance $F$ by connected sum with an embedding $h:S^{p+q}\to S^{m-1}\times I$,
then $F_+$ changes by a connected sum with the cone over $h$.
Hence the obstruction to smoothing $F_+$ changes by adding $[h]\in E^m(S^{p+q})$ \cite{BH70, Bo71}.
Therefore by changing $F$ modulo the ends we can make $F_+$ a smooth concordance (in particular, orthogonal to the boundary).
So we may assume that $F_+$ is a smooth concordance.

Define $F_-$ by symmetry to $F_+$.
The two proper concordances $F_+$ and $F_-$ fit together to give the required concordance between $f_p$ and $g_p$.
\qed

\subsection{Appendix: new direct proof of Lemma \ref{l:exact}.a} \label{0prolex}

Here we present a simpler direct proof of Lemma \ref{l:exact}.a for $m\ge2p+q+3$ and $2m\ge 2p+3q+4$ (this weaker result is sufficient for Corollary \ref{t:corlam}.a).
Such a proof is recovered from \cite[Restriction Lemma 5.2]{Sk06''} and illustrates ideas of proof of the deeper result \cite[Theorem 1.6]{Sk11} whose full strength is used in \S\ref{0modulo}.

Take the `standard' homotopy equivalence $h:S^m-\inc(1_p\times S^q)\to S^{m-q-1}$.

\smallskip
{\it Definition of $\lambda(f)\in\pi_{p+q}(S^{m-q-1})$ for an embedding $f:T^{p,q}\to S^m$ coinciding with $\inc$
on $T^{p,q}_+$.}
The restrictions of $f$ and $\inc$ onto $T^{p,q}_-$ coincide on the boundary and so form
a map
$$\widehat f:T^{p,q}\to S^m-\inc(1_p\times S^q)\overset h\to S^{m-q-1}.$$
Clearly, $\widehat f|_{\inc(-1_p\times S^q)}$ is null-homotopic.
Since $m\ge 2p+q+3$, by general position the map
$$T^{p,q}\to \frac{T^{p,q}}{-1_p\times S^q\vee S^p\times 1_q}\cong S^{p+q}$$
induces a 1--1 correspondence between $\pi_{p+q}(S^{m-q-1})$ and homotopy classes of maps $T^{p,q}\to S^{m-q-1}$ null-homotopic on $-1_p\times S^q$.
Let $\lambda(f)$ be the homotopy class corresponding to the homotopy class of $-1_p\times S^q$.
\footnote{In this definition of $\lambda(f)$ one can avoid using the triviality of the restriction to
$\inc(-1_p\times S^q)$, analogously to the definition of $\overline\lambda(f)$ given below.
However, the above definition is more convenient for Lemma \ref{l:isot}.
\newline
{\it Alternative definition of $\lambda(f)$.}
Denote $B^{p+q}:=T^{p,q}-(\Int D^p_+\times S^q\cup S^p\times \Int D^q_+).$
Since $m\ge 2p+q+2$, by general position making an isotopy we may assume that $f=\inc$ on $T^{p,q}-\Int B^{p+q}$.
Hence $f|_{B^{p+q}}$ and $\inc|_{B^{p+q}}$ form
a map $S^{p+q}\to S^m-\inc(1_p\times S^q)\overset h\to S^{m-q-1}$.
Let $\lambda(f)$ be the homotopy class of this map.
Since $m\ge 2p+q+3$, this is well-defined, i.e., is independent of the isotopy used in the definition.
\newline
There is analogous {\it alternative definition of $\overline\lambda(f)$}.
\newline
The element $\lambda(f)$ is not an isotopy invariant of $f$, as opposed to $\overline\lambda(f)$.}

\smallskip
{\it Definition of $\overline\lambda(f)\in\pi_{p+q-1}(S^{m-q-1})$ for an embedding $f:T^{p,q}_+\to S^m$
coinciding with $\inc$ on $1_p\times S^q$.}
Since $m\ge 2p+q+2$, by general position the map
$$\partial T^{p,q}_+\overset{\cong}\to T^{p,q-1}\to T^{p,q-1}/S^p\times 1_q\overset{\cong}\to S^{p+q-1}\vee S^{q-1}$$
induces a 1--1 correspondence between homotopy classes of maps $\partial T^{p,q}_+\to S^{m-q-1}$ and
$S^{p+q-1}\vee S^{q-1}\to S^{m-q-1}$.
Let $\overline\lambda(f)$ be the image under the restriction map of the homotopy class corresponding to the homotopy class of the map
$$f|_{\partial T^{p,q}_+}:\partial T^{p,q}_+\to S^m-\inc(1_p\times S^q)\overset h\to S^{m-q-1}.$$

\begin{Lemma}\label{l:isot} Assume that $m\ge2p+q+3$ and $2m\ge 2p+3q+4$.

(a) If embeddings $f,g:T^{p,q}\to S^m$ coincide with $\inc$ on $T^{p,q}_+$ and $\lambda(f)=\lambda(g)$, then $q_{\#}[f]=q_{\#}[g]$.

(b) For each $y\in\pi_q(S^l)$ there is an embedding $g:T^{p,q}\to S^m$ representing $\sigma'(y)$,
coinciding with $\inc$ on $T^{p,q}_+$ and such that $\lambda(g)=\pm\Sigma^py$.

(c) For each embedding $f:T^{p,q}_+\to S^m$ coinciding with $\inc$ on $1_p\times S^q$ we have $\overline\lambda(f)=\pm\Sigma^p\lambda'(f)$.
(Recall that $\lambda'$ is defined in \S\ref{0modulo}.)
\end{Lemma}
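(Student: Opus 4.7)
The plan is to address (c), (b), and (a) in turn: (c) is a direct unwinding of the two definitions, (b) is an explicit linking computation, and (a) is the main uniqueness step where both dimension hypotheses are used. For (c), I unwind both definitions on $\partial T^{p,q}_+$. The invariant $\lambda'(f)\in\pi_{q-1}(S^l)$ is, by its definition from \S\ref{0modulo} (with $q$ replaced by $q-1$), the homotopy class of the map $S^{q-1}\to\partial D^q_+\to D^p_+\times D^q_-\times S^l\to S^l$ obtained from the normal framing $t$ on $f(D^p_+\times D^q_-)$ and the normal section $s$ over $f(1_p\times D^q_+)$. The invariant $\overline\lambda(f)\in\pi_{p+q-1}(S^{m-q-1})$ is obtained from $f|_{\partial T^{p,q}_+}$ post-composed with $h$. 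The standard trivialization identifies the target $S^l$ of $\lambda'$ with an equatorial direction of $S^{m-q-1}$, and the $D^p_+$-parametric family of $S^{q-1}$-maps defining $\lambda'(f)$ assembles into the $(p+q-1)$-sphere representative of $\overline\lambda(f)$ via the Freudenthal $p$-fold suspension, yielding $\overline\lambda(f)=\pm\Sigma^p\lambda'(f)$; the sign reflects orientation conventions on the hemispheres.

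For (b), given $y:S^q\to S^l$, I form the embedded connected sum of $S^{p+q}\subset S^m$ with $\zeta_y|_{T^{p,q}}$ from the definition of $\sigma'$ in \S\ref{0newpro}. Because $m\ge 2p+q+3$, general position (as in the argument behind the Standardization Lemma~\ref{l:stand}) provides an isotopy to a representative $g$ satisfying $g|_{T^{p,q}_+}=\inc$. To compute $\lambda(g)$, I project $g|_{-1_p\times S^q}$ through $h$ into $S^{m-q-1}$. By the construction of $\zeta_y$, this restriction wraps into $S^l\subset S^{m-q-1}$ via the map $y$, while the connected-sum arc to the standard $S^{p+q}$ contributes nothing detectable by $\lambda$. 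The parametric spreading of this wrapping over the $D^{p+1}$-direction of $\zeta_y$ is exactly $\Sigma^p$ applied to $y$, so $\lambda(g)=\pm\Sigma^p y$.

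For (a), assume $\lambda(f)=\lambda(g)$. Since $f=g=\inc$ on $T^{p,q}_+$, the auxiliary maps $\widehat f,\widehat g:T^{p,q}\to S^{m-q-1}$ both factor through the pinch map to $S^{p+q}$, and by hypothesis represent the same element of $\pi_{p+q}(S^{m-q-1})$. Fix a homotopy between $\widehat f$ and $\widehat g$ rel the pinch locus. The dimension condition $2m\ge 2p+3q+4$ permits, by general position on $T^{p,q}_-\times I\to(S^m\setminus\inc(1_p\times S^q))\times I$, lifting this homotopy to a concordance between $f|_{T^{p,q}_-}$ and $g|_{T^{p,q}_-}$; extending by $\inc$ on $T^{p,q}_+\times I$ yields a piecewise smooth concordance between $f$ and $g$. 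The Extension Lemma~\ref{l:ext}, whose smoothing step absorbs the $E^m(S^{p+q})$-obstruction into a local-knot summand, promotes this concordance to an isotopy modulo a local knot, i.e., $q_\#[f]=q_\#[g]$.

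The main obstacle is the homotopy-to-concordance step in (a). The data $\lambda(f)=\lambda(g)$ only controls the link class of the classifying maps, and converting such a homotopy into a genuine concordance of embeddings uses both dimension hypotheses: $m\ge 2p+q+3$ to guarantee that a generic lift is an immersion, and $2m\ge 2p+3q+4$ to remove transverse double points of the track. The residual $E^m(S^{p+q})$-ambiguity produced when promoting concordance to isotopy is precisely what the quotient $q_\#$ is designed to absorb, which is why the conclusion of (a) is stated modulo local knotting rather than as an honest isotopy.
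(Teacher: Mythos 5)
Your overall plan (unwind (c), compute (b), prove (a) as the uniqueness step) matches the paper's, but part (a) contains a genuine gap and parts (b)--(c) assert the key identities rather than derive them.

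In (a) you propose to ``lift [a] homotopy to a concordance between $f|_{T^{p,q}_-}$ and $g|_{T^{p,q}_-}$ by general position'' and then appeal to the Extension Lemma~\ref{l:ext}. Neither step is right. Passing from a homotopy of the classifying maps $\widehat f\simeq\widehat g$ rel boundary to an isotopy (or even a concordance) of the embeddings is not general position: it requires an embedding theorem in the metastable range. The paper notes that the abbreviations $f,g:T^{p,q}_-\to S^m-\inc(1_p\times S^q)$ are homotopic rel boundary and that the hypotheses $q-1\ge 2(p+q)-m+1$ and $m-q-2\ge 2(p+q)-m+2$ (which follow from $m\ge2p+q+3$ and $2m\ge 2p+3q+4$) allow one to invoke Irwin's theorem \cite{Ir65}, giving a PL isotopy directly; the complete obstruction to smoothing this PL isotopy lies in $E^m(S^{p+q})$ because $2m\ge3q+4$, and it is killed by a local knot, which is exactly what $q_\#$ quotients out. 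The Extension Lemma~\ref{l:ext} is a different tool (it extends concordances of reflection-symmetric embeddings one dimension up) and plays no role in this step.

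In (b), the claim $\lambda(g)=\pm\Sigma^p y$ does not follow from a loose statement that ``the parametric spreading\dots is exactly $\Sigma^p$ applied to $y$.'' After moving $\zeta_y$ back to $\inc$ by the linear straightening isotopy, the paper identifies $\lambda(g)$ with the homotopy class of the $S^{p+q}$ summand in $S^m-\zeta_y(1_p\times S^q)$, and the $\pm\Sigma^p$ factor then comes from Kervaire's symmetry of linking coefficients \cite[Lemma 5.1]{Ke59} (using stability of $\pi_q(S^l)$ from $q\le2l-2$). Your sketch mis-locates which component carries the class and omits the \cite{Ke59} input entirely. In (c) your picture is in the right spirit, but $\lambda'(f)$ is a single class in $\pi_{q-1}(S^l)$, not a $D^p_+$-parametric family; the paper obtains the suspension identity by decomposing $\partial T^{p,q}_+$ into three explicit pieces and identifying $S^{m-q-1}$ with the join $\partial D^p_+*S^l$, which is what would have to be written out to make your ``assembles via Freudenthal'' precise.
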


{\it Proof of (a).}
Since $\lambda(f)=\lambda(g)$, the restrictions $f,g:T^{p,q}_-\to S^m-\inc(1_p\times S^q)$ are homotopic relative to the boundary.
Since $q-1\ge 2(p+q)-m+1$ and $m-q-2\ge 2(p+q)-m+2$, these restrictions
are PL isotopic by \cite{Ir65}.
Since $2m\ge3q+4$, there is a unique obstruction to smoothing such a PL isotopy, assuming values in $E^m(S^{p+q})$.
This obstruction can be killed by embedded connected sum of $f$ with an embedding $S^{p+q}\to S^m$.
Hence $q_{\#}[f]=q_{\#}[g]$.
\qed

\smallskip
{\it Proof of (b).} Take a map $y:S^q\to S^l$ representing $y$.
Take the linear homotopy between the map $S^l\to 0_{l+1}\in\R^{l+1}$ and the composition of $y$ with the inclusion $S^l\subset\R^{l+1}$.
This homotopy defines an isotopy between $\inc$ and the embedding $\zeta_y$ from the definition of $\sigma'$.
This isotopy is `covered' by an isotopy of $S^m$.
The latter isotopy carries the representative of $\sigma'(y)$ constructed in the definition to an embedding $T^{p,q}\to S^m$ which we denote by $g$.
Clearly, $g=\inc$ on $T^{p,q}_+$.
Clearly, $\lambda(g)$ equals to the homotopy class of `the image' of $S^{p+q}$ under this isotopy in $S^m-\inc(1_p\times S^q)\sim S^{m-q-1}$.
This equals to the homotopy class of $S^{p+q}$ in $S^m-\zeta_y(1_p\times S^q)\sim S^{m-q-1}$.
Since $q\le2l-2$, by the Freudenthal Suspension Theorem the group $\pi_q(S^l)$ is stable.
Hence by \cite[Lemma 5.1]{Ke59} the latter homotopy class equals to the $\pm\Sigma^p$-image of
the homotopy class of $\zeta_y(1_p\times S^q)$ in $S^m-S^{p+q}\sim S^l$.
The latter class equals to $y$, so $\lambda(g)=\pm\Sigma^py$.
\qed

\smallskip
{\it Proof of (c).}
(This is a simpler `algebraic' analogue of \cite[Lemma 3.8]{Sk11}.)

Take an embedding $f:T^{p,q}_+\to S^m$.
Since $2m\ge3q+4$, making an isotopy of $S^m$ we may assume that $f=\inc$ on $1_p\times S^q$.
Clearly, $\overline\lambda(f)$ is the homotopy class of the composition
$$S^{p+q-1}\overset{\cong}\to \partial D^p_+\times D^q_-\cup D^p_-\times\partial D^q_-\overset{f\cup\inc}\to S^m-\inc(1_p\times S^q)\overset h\to S^{m-q-1},$$
where the map $f\cup\inc$ is formed by the restrictions
of  $f$ and $\inc$ which agree on the boundary.

Identify in the natural way

$\bullet$ $S^{p+q-1}$ with $\partial D^p_+\times D^q_-\cup D^p_-\times\partial D^q_-$;

$\bullet$ $D^p_+\times D^q_-$ with $D^q_-\times D^p_+$; and

$\bullet$ $S^{m-q-1}\quad\text{with}\quad\partial(D^p_+\times D^{l+1})=\partial D^p_+*S^l,
\quad\text{where}\quad X*Y=\dfrac{X\times I\times Y}{X\times 0\times y,x\times 1\times Y}$.

Use the notation $t,s$ from definition of $\lambda'$ in \S\ref{0modulo}.
Since $f=\inc$ on $1_p\times S^q$, we may assume that $ht=\pr_2$ on $D^q_+\times\partial(D^p_+\times D^{l+1})$.
So the above composition representing $\overline\lambda(f)$ maps

$\bullet$ $\partial D^p_+\times D^q_-=f^{-1}t(D^q_-\times\partial D^p_+\times0_{l+1})$ to
$\partial D^p_+\times 0\times[S^l]\subset\partial D^p_+*S^l$ as $\pr_2t^{-1}f$;

$\bullet$ $-1_p\times\partial D^q_-$ to $[\partial D^p_+]\times1\times S^l\subset\partial D^p_+*S^l$ as $\pr_3t^{-1}s\pr_2$;

$\bullet$ $D^p_-\times\partial D^q_-$ `linearly' to the join $\partial D^p_+*S^l$.

Therefore $\pm\overline\lambda(f)=\Sigma^p[\pr_3t^{-1}s\theta]=\Sigma^p\lambda'(f)$.
\qed

\smallskip
In the rest of this subsection we prove Lemma \ref{l:exact}.a for $m\ge2p+q+3$ and $2m\ge 2p+3q+4$

\smallskip
{\it The exactness at $E^m_\#(T^{p,q})$.}
Clearly, $\overline\nu\sigma'=0$.

Let $f:T^{p,q}\to S^m$ be an embedding such that $\overline\nu q_{\#}[f]=0$.
Making an isotopy of $S^m$ we may assume that $f=\inc$ on $T^{p,q}_+$.
Since $q\le2l-1$, by the Freudenthal Suspension Theorem there is $x\in\pi_q(S^l)$ such that $\Sigma^px=\lambda(f)$.
So by Lemma \ref{l:isot}.b there is an embedding $g:T^{p,q}\to S^m$ representing $\pm\sigma'(x)$, coinciding with $\inc$ on $T^{p,q}_+$ and such that $\lambda(g)=\Sigma^px$.
Then by Lemma \ref{l:isot}.a $q_{\#}[f]=q_{\#}[g]=\sigma'(\pm x)$.
\qed

\smallskip
{\it The exactness at $E^m(T^{p,q}_+)$.}
Take any $z\in\ker\lambda'$.
Since $2m\ge3q+4$, by Lemma \ref{l:exact}.b the right $\tau_p$ of the diagram
from the beginning of \S\ref{0newpro} is an epimorphism.
Hence there is $z_1\in \pi_q(V_{m-q,p})$ such that $z=\tau_pz_1$.
By Lemma \ref{l:exact}.c $\lambda''z_1=\lambda'z=0$.
Then by exactness there is  $z_2\in \pi_q(V_{m-q,p+1})$ such that $z_1=\nu''z_2$.
Then by Lemma \ref{l:exact}.c $z=\tau_pz_1=\tau_p\nu''z_2=\overline\nu(q_{\#}r\tau_{p+1}z_2)$.

Take an embedding $f:T^{p,q}\to S^m$.
Then $f|_{T^{p,q}_-}$ gives a null-homotopy of $\overline\lambda(f)$.
Since $q\le2l-2$, by the Freudenthal Suspension Theorem the homomorphism $\Sigma^p$ of Lemma \ref{l:isot}.c is injective.
So $\lambda'[f]=0$.
\qed

\begin{figure}[h]
\centerline{\includegraphics[width=8cm]{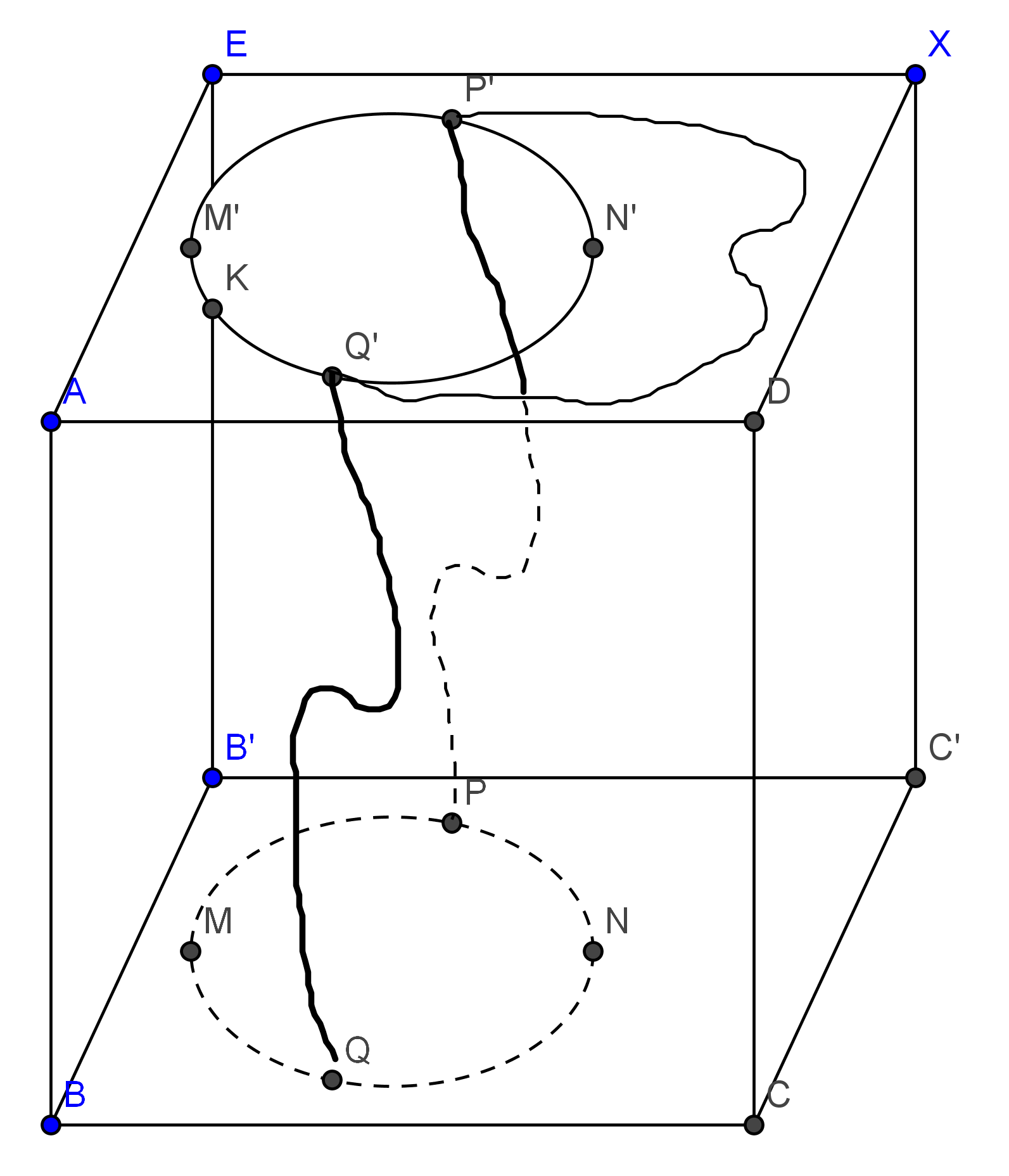}}
\caption{{\it To the proofs that $\ker\sigma'\subset\im\lambda'$ and $\sigma'\lambda'=0$.}
The cube stands for $S^m\times I$; its horizontal faces stand for $S^m\times k$, $k=0,1$.
The ellipses stand for $\inc(T^{p,q})\times k$, $k=0,1$.
The curved line $P'Q'$ stands for $g(D^p_-\times S^q)\times1$.
The curved lines $PP'$ and $QQ'$ stand for the image of $\partial D^p_-\times S^q\times I$
under the isotopy between standard embeddings.
The points $M,N,M',N'$ stand for $\inc(\pm1_p\times S^q)\times k$, $k=0,1$.}
\label{f:lambda}
\end{figure}

\smallskip
{\it Proof that $\ker\sigma'\subset\im\lambda'$.}
Take any $x\in\ker\sigma'$.

If $p=0$, then the linking number of $\sigma'(x)=0$ is $x$.
So $x=0\in\im\lambda'$.

If $p>0$, then $2m\ge2p+3q+4>3q+5$.
Take a representative $g$ of $\sigma'(x)=0$ given by Lemma \ref{l:isot}.b.
The restriction $T^{p,q}_+\times I\to S^m\times I$ of an isotopy between $g$ and the standard embedding is an isotopy between standard embeddings.
So this restriction can be `capped' to give an embedding $G:T^{p,q+1}_+\to S^{m+1}$.
Since $2m\ge3q+5$, we may assume that $G=\inc$ on $1_p\times S^q$.
We have
$$\Sigma^p\lambda'(G)\overset{(1)}=\pm\overline\lambda(G)\overset{(2)}=\pm\lambda(g)\overset{(3)}=\Sigma^px. $$
Here equalities (1) and (3) follow by Lemma \ref{l:isot}.bc  and equality (2) is proved below.
Since $q\le2l-2$, by the Freudenthal Suspension Theorem $\Sigma^p$ injective.
So $x=\pm \lambda'(G)$.

Let us prove equality (2).
Denote $\Delta^{q+1}:=\inc(1_p\times D^{q+1})$.
Then $\Delta^{q+1}\cap\inc(T^{p,q}_-)=\emptyset$ and we may assume that $g$ is transverse to $\Delta^{q+1}$.
Since $q\le2l-2$, by the Freudenthal Suspension Theorem the group $\pi_q(S^l)$ is stable.
So $\lambda(g)$ corresponds under Pontryagin construction and (de)suspension to the framed intersection
$\Delta^{q+1}\cap g(T^{p,q}_-)\subset \Delta^{q+1}$, for certain framings on $\Delta^{q+1}$ and on $g(T^{p,q}_-)$.
Analogously, we may assume that $G$ is transverse to $\Delta^{q+1}\times I\subset S^m\times I\subset S^{m+1}$.
So $\overline\lambda(G)$ corresponds under Pontryagin construction and (de)suspension to the framed intersection
$\Delta^{q+1}\times I\cap G(\partial T^{p,q}_-\times I)\subset \Delta^{q+1}\times I$, for certain framings on $\Delta^{q+1}\times I$ and on $G(\partial T^{p,q}_-\times I)$.
Consider the framed intersection $\Delta^{q+1}\times I\cap G(T^{p,q}_-\times I)\subset \Delta^{q+1}\times I$, for certain framings on $\Delta^{q+1}\times I$ and on $G(T^{p,q}_-\times I)$ of which the above framings are the restrictions.
The latter  framed intersection is a framed cobordism between the first two.
Thus $\overline\lambda(G)=\lambda(g)$.
\qed

\smallskip
{\it Proof that $\sigma'\lambda'=0$.}
\footnote{If $2m\ge3q+5$, by Lemma \ref{l:exact}.b the left $\tau_p$ is an epimorphism.
Then for $2m\ge2p+3q+4$ by Lemma \ref{l:exact}.c $\sigma'\lambda'=0$.}
(This proof suggests an alternative definition of $\zeta\lambda'$ allowing minor simplification in \cite{Sk11}.)
Take an embedding $F:T^{p,q+1}_+\to S^{m+1}$.
Analogously to the Standardization Lemma \ref{l:stand} making an isotopy we can assume that

$\bullet$ $F|_{D^p_+\times S^q\times I}$ is a concordance between standard embeddings,

$\bullet$ $F|_{D^p_+\times D^{q+1}_k}$ is the standard embedding into $D^{m+1}_k$ for each $k=0,1$.

Since $q\le2l-1$, by the Freudenthal Suspension Theorem there is $x\in\pi_q(S^l)$ such that $\Sigma^px=\overline\lambda(F)$.
By Lemma \ref{l:isot}.b there is an embedding $g:T^{p,q}\to S^m$ representing $\pm\sigma'(x)$, coinciding with $\inc$ on $T^{p,q}_+$ and such that $\lambda(g)=\Sigma^px$.

The concordance $F|_{D^p_+\times S^q\times I}$ is ambient, so there is a diffeomorphism
$$\Phi:S^m\times I\to S^m\times I\quad\text{such that}\quad \Phi(F(y,0),t)=
F(y,t)\quad\text{for each }y\in D^p_+\times S^q,\ t\in I.$$
So the standard embedding $\inc$ is concordant to an embedding $f:T^{p,q}\to S^m\times1=S^m$ defined by $f(y):=\Phi(F_0(y),1)$.
The class $\lambda(f)$ corresponds to the homotopy class of
$$\inc|_{T^{p,q}_-}\cup f|_{T^{p,q}_-}:T^{p,q}\to S^m\times 1-\inc(\Int T^{p,q}_+)\times1\sim S^{m-q-1}.$$
We have $\lambda(f)=\overline\lambda(F)=\Sigma^px=\lambda(g)$, where the first equality is analogous to the equality (2) in the previous proof.
Besides, $f$ and $g$ are standard on $T^{p,q}_+$.
Therefore by Lemma \ref{l:isot}.a $\sigma'\lambda'(F)=q_{\#}[g]=q_{\#}[f]=0$.
\qed


\end{document}